\tikzstyle{vertex}=[circle, draw, inner sep=0pt, minimum size=6pt]
\numberwithin{equation}{section}
\newtheorem*{theorem*}{Theorem}
\newtheorem*{corollary*}{\bf Corollary}
\newtheorem*{remark*}{\bf Remark}
\newtheorem{theorem}{Theorem}[section]
\newtheorem{corollary}[theorem]{Corollary}
\newtheorem{definition}[theorem]{Definition}
\newtheorem{lemma}[theorem]{Lemma}
\newtheorem{proposition}[theorem]{Proposition}
\newtheorem{remark}[theorem]{Remark}
\title[Cohomology of line bundles on horospherical varieties]
{Cohomology of line bundles on horospherical varieties}
 \author{Narasimha Chary Bonala}
\address{%
Narasimha Chary Bonala\\
Max Planck Institute for Mathematics\\
Vivatsgasse 7, Bonn\\
Germany
}
\curraddr{
Fakult\"{a}t f\"{u}r Mathematik\\
Ruhr-Universit\"{a}t Bochum, Bochum\\
Germany\\
Email: Narasimha.bonala@rub.de
}
\author{Beno\^{i}t Dejoncheere}
\address{ 
Beno\^{i}t Dejoncheere,
    University of Alberta,
    Department of Mathematical
    and Statistical Sciences,
    Edmonton, AB, Canada T6G 2G1.
    Email: dejonche@ualberta.ca }
\begin{document}
\maketitle

\begin{abstract} 
A horospherical variety is a normal algebraic variety where a connected
reductive algebraic group acts with an open orbit isomorphic to a torus bundle over a flag variety.
In this article we study the cohomology of line bundles on complete horospherical varieties.

\end{abstract}
 
\tableofcontents
\section{Introduction}

Let $G$ be a connected reductive algebraic group over the field of complex numbers $\mathbb C$ and let $H$ be a closed subgroup of $G$.
A homogeneous space $G/H$ is said to be horospherical if $H$ contains the unipotent radical of a Borel subgroup of $G$, or equivalently, $G/H$ is isomorphic to a torus bundle over a flag variety $G/P$.
A normal $G$-variety is called horospherical if it contains an open dense $G$-orbit isomorphic to a horospherical homogeneous space $G/H$.
Toric varieties and flag varieties are horospherical varieties (see \cite{Pa} for more details).
Horospherical varieties are a special class of spherical varieties. A spherical variety is a normal $G$-variety $X$ such that there exists $x\in X$ and a Borel subgroup $B$ of $G$ satisfying that the
$B$-orbit of $x$ is open in $X$.

Let $\mathcal L$ be a line bundle on $X$. Up to replacing $G$ by a finite cover, we can assume that $G=C\times [G, G]$, where $C$ is a torus and $[G, G]$ is simply connected, hence one can assume that $\mathcal{L}$ is $G$-linearized. If $X$ is complete, 
then the cohomology
groups $H^i(X, \mathcal L)$ are finite dimensional representations of $G$.
If $X$ is a flag variety, then Borel-Weil-Bott theorem describe these cohomology groups. 
If $X$ is a toric variety, then the cohomology groups can be described using combinatorics of its associated fan (see \cite{Cox}).
For spherical varieties, in \cite{brion94}, Brion has given a bound on the mulplicities of these modules. In \cite{Brion1990}, if $X$ is projective spherical and $\mathcal L$ is generated by global sections,
Brion proved that the cohomology groups $H^i(X, \mathcal L)$ for $i>0$ vanish. In \cite{Tchoudjem1}, \cite{Tchoudjem2}, \cite{Tchoudjem3} and \cite{Tchoudjem4},
Tchoudjem studied these cohomology groups in the case of X is a compactification of adjoint semisimple
group, a wonderful compactification of a reductive group, a wonderful variety of minimal rank and
a complete symmetric variety respectively. In this article we consider the cohomology of line bundles on complete horospherical varieties.
We prove that the cohomology groups splits into cohomology of line bundles on flag varieties and toric varieties.
Our main tool in this article is the machinery of Grothendieck-Cousin complexes (see \cite{Ke1} for more details), and we also prove a K\"{u}nneth-like formula for local cohomology. 

To describe our results we need some notation: 
let $X$ and $Y$ be affine schemes. Let $Z_1\subset X$ and $Z_2\subset Y$ be two locally complete intersections subschemes, of respective codimensions $l_1$ and $l_2$. 
Let $L_1$ and $L_2$ be two invertible sheaves respectively on $X$ and $Y$. Let $p_1:X\times Y \to X$ and $p_2:X\times Y\to Y$ be the projections, and 
let ${\mathcal L}_i:=p_i^*L_i$. Assume $X$ and $Y$ are irreducible and Cohen-Macaulay.
Then we have the following isomorphism (See Theorem \ref{kunneth-local}):
\begin{theorem*}
 $
H^{l_1+l_2}_{Z_1\times Z_2}(X\times Y, {\mathcal L}_1\otimes{\mathcal L}_2)\simeq H^{l_1}_{Z_1}(X,L_1)\otimes H^{l_2}_{Z_2}(Y,L_2).
$
\end{theorem*}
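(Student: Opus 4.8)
The plan is to compute both sides via explicit resolutions adapted to the local cohomology functor, exploiting the fact that $Z_1, Z_2$ are locally complete intersections. Since $Z_i \subset X$ (resp. $Y$) has codimension $l_i$ and is a local complete intersection in a Cohen-Macaulay scheme, $\mathcal{O}_X$ is (locally) a Cohen-Macaulay module along $Z_1$, so the local cohomology $H^j_{Z_1}(X, L_1)$ vanishes for $j \neq l_1$, and similarly for $Z_2$; this is the key vanishing that makes the statement clean. I would first reduce to the affine local situation: cover $X$ by affine opens on which $Z_1$ is cut out by a regular sequence $f_1, \dots, f_{l_1}$, do the same for $Y$ with $g_1, \dots, g_{l_2}$, and note that on $X \times Y$ the subscheme $Z_1 \times Z_2$ is then cut out by the regular sequence $f_1, \dots, f_{l_1}, g_1, \dots, g_{l_2}$ of length $l_1 + l_2$. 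Hence $\mathcal{O}_{X\times Y}$ is Cohen-Macaulay along $Z_1 \times Z_2$ as well, and $H^{l_1+l_2}_{Z_1 \times Z_2}(X\times Y, \mathcal{L}_1\otimes\mathcal{L}_2)$ is the only nonvanishing local cohomology group there.

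The second step is to realize the top local cohomology modules concretely. For a local complete intersection cut out by a regular sequence, the top local cohomology sheaf is computed by the Čech–Koszul complex: $H^{l_1}_{Z_1}(X, L_1)$ is the cokernel of the last map in the Čech complex on the covering $\{X \setminus V(f_i)\}$ with coefficients in $L_1$, equivalently the top cohomology of the stable Koszul complex $K^\bullet(f_1^\infty, \dots, f_{l_1}^\infty) \otimes L_1$. I would write $C_1^\bullet$ for this complex (a complex of flat, indeed localization, modules concentrated so that its only cohomology is in degree $l_1$) and $C_2^\bullet$ the analogous complex for $(Y, Z_2, L_2)$. The point is then that the Čech–Koszul complex computing $R\Gamma_{Z_1\times Z_2}(X\times Y, \mathcal{L}_1\otimes\mathcal{L}_2)$ on the product covering is precisely the total complex $\mathrm{Tot}(p_1^* C_1^\bullet \otimes_{\mathcal{O}_{X\times Y}} p_2^* C_2^\bullet)$, because the regular sequence cutting out $Z_1 \times Z_2$ is the concatenation and the stable Koszul complex of a concatenation is the tensor product of the stable Koszul complexes.

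The third step is a Künneth spectral sequence argument. Since each $C_i^\bullet$ has cohomology concentrated in a single degree ($l_i$), and the terms are flat over the respective affine base, the Künneth spectral sequence for the tensor product of the two complexes of flat modules degenerates: $H^n\big(\mathrm{Tot}(p_1^*C_1^\bullet \otimes p_2^*C_2^\bullet)\big) = \bigoplus_{a+b=n} H^a(C_1^\bullet) \otimes_{\mathbb{C}} H^b(C_2^\bullet)$, and the only surviving summand is $a = l_1$, $b = l_2$, giving exactly $H^{l_1}_{Z_1}(X, L_1) \otimes H^{l_2}_{Z_2}(Y, L_2)$ in degree $n = l_1 + l_2$. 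Some care is needed because the tensor product over $\mathbb{C}$ must be matched against the tensor product over the structure sheaves: since $X \times Y$ is the product of the affine schemes $X = \mathrm{Spec}\, A$ and $Y = \mathrm{Spec}\, B$, we have $\mathcal{O}_{X\times Y} = A \otimes_{\mathbb{C}} B$, and the sections of $p_1^*C_1^\bullet \otimes_{\mathcal{O}_{X\times Y}} p_2^*C_2^\bullet$ are $C_1^\bullet(X) \otimes_{\mathbb{C}} C_2^\bullet(Y)$ as a complex of $\mathbb{C}$-vector spaces, which is what feeds the classical Künneth theorem over the field $\mathbb{C}$.

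**The main obstacle.** The delicate point is the globalization: the regular sequences cutting out $Z_1$ and $Z_2$ exist only locally, so the identification of $R\Gamma_{Z_1\times Z_2}$ with the total tensor complex must be done at the level of derived categories (or via a Čech double complex on a simultaneous affine cover of $X$, $Y$, and hence of $X \times Y$), and one must check the local identifications glue. Equivalently, one can argue sheaf-theoretically: the local cohomology \emph{sheaves} $\underline{H}^{l_i}_{Z_i}(L_i)$ are what behave well, and one shows $\underline{H}^{l_1+l_2}_{Z_1\times Z_2}(\mathcal{L}_1\otimes\mathcal{L}_2) \cong p_1^*\underline{H}^{l_1}_{Z_1}(L_1) \otimes p_2^*\underline{H}^{l_2}_{Z_2}(L_2)$ locally and then takes global sections, using that on affine schemes higher cohomology of these (quasi-coherent) sheaves vanishes so that $H^{l_1+l_2}_{Z_1\times Z_2}(X\times Y, -) = \Gamma(X\times Y, \underline{H}^{l_1+l_2}_{Z_1\times Z_2}(-))$ — this last identification again relying on the Cohen–Macaulay hypothesis to kill the intermediate terms in the local-to-global spectral sequence. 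I expect verifying this vanishing and the compatibility of the two spectral sequences (local-to-global for local cohomology, and Künneth) to be the technical heart of the argument.
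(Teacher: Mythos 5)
Your argument is correct in outline, but it takes a genuinely different route from the paper's. The paper never chooses local equations: it writes $Z_1\times Z_2=(Z_1\times Y)\cap(X\times Z_2)$ and runs the composite-functor spectral sequence ${\mathcal H}^p_{Z_1\times Y}({\mathcal H}^q_{X\times Z_2}({\mathcal L}_1\otimes{\mathcal L}_2))\Rightarrow{\mathcal H}^*_{Z_1\times Z_2}({\mathcal L}_1\otimes{\mathcal L}_2)$, which collapses by exactly the vanishing you identify (a codimension-$l$ local complete intersection in a Cohen--Macaulay scheme has local cohomology concentrated in degree $l$); the tensor factorization is then extracted from its Lemma \ref{isomorphism-flat-modules}, which shows ${\mathcal H}^p_Z({\mathcal G})\otimes{\mathcal F}\to{\mathcal H}^p_Z({\mathcal G}\otimes{\mathcal F})$ is an isomorphism when ${\mathcal F}$ is flat (via Lazard's theorem) and, in the top degree, when ${\mathcal G}$ is flat (via right-exactness of ${\mathcal H}^l_Z({\mathcal G}\otimes\bullet)$, which again rests on the top-degree vanishing). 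Your \v{C}ech--Koszul plus K\"unneth computation is more explicit and gives a concrete model of the top local cohomology, but at the cost of the globalization you yourself flag: since the regular sequences exist only locally, you must package the comparison as a globally defined external-product map of sheaves $p_1^*{\mathcal H}^{l_1}_{Z_1}(L_1)\otimes p_2^*{\mathcal H}^{l_2}_{Z_2}(L_2)\to{\mathcal H}^{l_1+l_2}_{Z_1\times Z_2}({\mathcal L}_1\otimes{\mathcal L}_2)$ and verify it is an isomorphism locally on your cover; the paper's functorial route does this bookkeeping automatically. Two small corrections to your last step: the identification of $H^n_{Z_1\times Z_2}(X\times Y,\cdot)$ with global sections of the local cohomology sheaf needs only that $X\times Y$ is affine and the sheaves are quasi-coherent (so the local-to-global spectral sequence has a single nonzero row), not the Cohen--Macaulay hypothesis; and your K\"unneth step is a tensor product over the field $k$, where flatness is automatic, so the degeneration you invoke is immediate and needs no extra care.
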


Let $X$ be a complete horospherical variety. We have that any Weil divisor $D$ of $X$ is linearly equivalent to a $B$-stable divisor, that is of the form (need not be unique)  
$$D\approx \sum_{i=1}^kd_iX_i+\sum_{\alpha\in I}d_{\alpha}D_{\alpha},$$
where $X_i'$s are $G$-stable prime divisors, $D_{\alpha}'$s are $B-$stable prime divisors and $I$ is a subset of the set of simple roots for $G$ (for more details see Section 2).
If we further assume that $X$ is smooth and toroidal, then $D\approx p^*E_1 + E_2$, where $E_1\approx \sum\limits_{\alpha\in I}d_{\alpha}D_{\alpha}$ is a divisor on the base space $G/P$, $p:X\to G/P$ is the projection map, and
$E_2\approx \sum\limits_{i=1}^kd_iX_i$ can be seen as a divisor coming from  the toric fibre $Y$, that is $X_i=G\times^PY_i$ where $Y_i$ is a toric prime divisor in $Y$ (see Section 2 for precise details). 
Let $T$ be a maximal torus of a Borel subgroup $B$ and $T'$ be 
the torus correspond to the toric fibre $Y$. Let $E_2'=\sum_{i=1}^kd_iY_i$.
Let $\mathfrak{g}$ be the Lie algebra of $G$. 
Then we prove the following (See Corollary \ref{horospherical}):
\begin{corollary*}
Let $X$ be a smooth toroidal complete variety. There is an  isomorphism (non canonical) of $T\times T'$-modules and of $\mathfrak{g}$-modules
\[
H^n(X, \mathcal O _X(D))\simeq H^n(X,\mathcal{O}_X(p^*E_1 + E_2)) = \bigoplus\limits_{p+q=n}H^p(G/P,{\mathcal O}_{G/P}(E_1))\otimes H^q(Y,{\mathcal O}_Y(E'_2))
\]
\end{corollary*}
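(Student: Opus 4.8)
\emph{The plan.} I would prove this by comparing Grothendieck--Cousin complexes built from compatible stratifications of $X$, $G/P$ and the toric fibre $Y$, and then invoking the local K\"unneth isomorphism of Theorem~\ref{kunneth-local}. First fix the Bruhat (Schubert) stratification of $G/P$ and the orbit stratification $Y=\bigsqcup_{\sigma}O_{\sigma}$ into $T'$-orbits. Over each Bruhat cell the $P$-bundle $G\to G/P$, hence $p\colon X\to G/P$, is trivial, so these two stratifications induce a stratification of $X$ whose strata $S$ are locally products (Bruhat cell)$\,\times O_{\sigma}$; each $S$ is smooth, affine, of codimension equal to $\dim\sigma$ plus the codimension of the cell, and it lies in an affine open of the form $A\times U_{\sigma}$ (Bruhat chart times toric chart) in which it is a complete intersection. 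Because $D\approx p^{*}E_{1}+E_{2}$ with $E_{1}$ pulled back from $G/P$ and $E_{2}$ pulled back, on $A\times U_{\sigma}$, from the toric chart $U_{\sigma}$, the line bundle $\mathcal O_X(D)$ restricts on $A\times U_{\sigma}$ to an external tensor product $\mathcal O_{A}(E_{1})\boxtimes\mathcal O_{U_{\sigma}}(E_{2}')$. Since $X$, $G/P$, $Y$ are smooth, Kempf's machinery \cite{Ke1} applies: the Cousin complexes $C^{\bullet}_{X}$, $C^{\bullet}_{G/P}$, $C^{\bullet}_{Y}$ attached to these stratifications are resolutions of the respective line bundles, so $H^{n}(X,\mathcal O_{X}(D))=H^{n}(\Gamma(X,C^{\bullet}_{X}))$ and likewise for $G/P$ and $Y$; moreover they are complexes of $T\times T'$-modules, and of $\mathfrak g$-modules on the $X$- and $G/P$-sides (by Kempf's identification of the terms with suitable local-cohomology / generalized-Verma-type modules and of the differentials with $\mathfrak g$-linear maps).

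\emph{Assembling the K\"unneth formula.} Applying Theorem~\ref{kunneth-local} on each chart $A\times U_{\sigma}$ (smooth, hence irreducible and Cohen--Macaulay), with $Z_{1}$ the Bruhat cell in $A$ and $Z_{2}=O_{\sigma}$ in $U_{\sigma}$, both locally complete intersections, identifies the top local cohomology of $\mathcal O_X(D)$ along $S$ with the tensor product of the corresponding local cohomologies for $\mathcal O_{G/P}(E_{1})$ along the cell and for $\mathcal O_{Y}(E_{2}')$ along $O_{\sigma}$. Summing over all strata of codimension $n$ gives a natural identification of graded pieces
\[
C^{n}_{X}\;\cong\;\bigoplus_{p+q=n}C^{p}_{G/P}\otimes_{\mathbb C}C^{q}_{Y}\;=\;\bigl(\operatorname{Tot}(C^{\bullet}_{G/P}\otimes_{\mathbb C}C^{\bullet}_{Y})\bigr)^{n}.
\]
The crucial---and, I expect, the hardest---step is then to check that under these identifications the Cousin differential of $X$ corresponds to the differential of the total complex of $C^{\bullet}_{G/P}\otimes C^{\bullet}_{Y}$. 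By the codimension count the Cousin boundary maps of $X$ connect only strata differing in \emph{one} of the two factors, so the ``shape'' of the differential is automatic; what remains is to see that the (necessarily non-canonical) K\"unneth isomorphisms of Theorem~\ref{kunneth-local} can be chosen \emph{coherently across all strata} so as to intertwine the connecting homomorphisms defining the Cousin maps, signs included. This is delicate precisely because that isomorphism involves choices and because the charts $A\times U_{\sigma}$ overlap in a way that couples the transition functions of $p\colon X\to G/P$ with those of $Y$; I would handle it by producing all the K\"unneth isomorphisms from one explicit local-cohomology (\v{C}ech-type) presentation of the two sheaves and checking compatibility on overlaps and with the boundary maps directly.

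\emph{Conclusion and equivariance.} Granting the identification $C^{\bullet}_{X}\cong\operatorname{Tot}(C^{\bullet}_{G/P}\otimes_{\mathbb C}C^{\bullet}_{Y})$ of complexes, the algebraic K\"unneth theorem over the field $\mathbb C$ gives
\[
H^{n}(X,\mathcal O_{X}(D))\;=\;\bigoplus_{p+q=n}H^{p}(\Gamma(C^{\bullet}_{G/P}))\otimes_{\mathbb C}H^{q}(\Gamma(C^{\bullet}_{Y}))\;=\;\bigoplus_{p+q=n}H^{p}(G/P,\mathcal O_{G/P}(E_{1}))\otimes_{\mathbb C}H^{q}(Y,\mathcal O_{Y}(E_{2}')).
\]
For the equivariance: the Bruhat cells are $B$-stable and the $O_{\sigma}$ are $T'$-stable, and $T$ acts on $Y$ through $T\to T'$, so the whole construction is carried out $T\times T'$-equivariantly and the displayed isomorphism is one of $T\times T'$-modules; it is $\mathfrak g$-linear because the $X$- and $G/P$-Cousin complexes carry $\mathfrak g$-actions with $\mathfrak g$-linear differentials while $\mathfrak g$ acts trivially on the $Y$-factor. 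Besides the differential-compatibility above, the remaining point to nail down carefully is that Kempf's Cohen--Macaulayness/filtration hypotheses do hold for the product stratification of $X$, so that $\Gamma(X,C^{\bullet}_{X})$ genuinely computes $H^{\bullet}(X,\mathcal O_{X}(D))$ rather than only through a spectral sequence.
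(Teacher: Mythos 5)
Your proposal follows essentially the same route as the paper: stratify $X$ by the $B\times T'$-orbits $BwP/P\times C_{\sigma}$ sitting inside product charts furnished by the local structure theorem, apply the local-cohomology K\"unneth isomorphism of Theorem \ref{kunneth-local} stratum by stratum, and identify the Grothendieck--Cousin complex of $X$ with the total complex of the tensor product of the Cousin complexes of $G/P$ and $Y$ --- which is exactly Theorem \ref{cohomology-fibration} together with its equivariant refinements in Section 5. The only cosmetic difference is at the final step, where you invoke the algebraic K\"unneth theorem for complexes of $\mathbb{C}$-vector spaces while the paper runs the spectral sequence of the bicomplex and uses Borel--Weil--Bott to make it collapse; over a field these yield the same (non-canonical) splitting.
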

\begin{remark*}
The $T\times T'$-module and $\mathfrak{g}$-module structures on these cohomology groups are compatible. By Borel-Weil-Bott theorem, at most one of the factors in the previous direct sum is nonzero.
\end{remark*}
The case of other complete horospherical varieties is straightforward: they all admit a $G$-equivariant resolution by a smooth complete toroidal horospherical variety (see Proposition \ref{nontoroidalcase}).
We now give a more concrete description of the cohomology groups $H^n(X,\mathcal{O}_X(p^*E_1 + E_2))$.

Let $\varpi_{\alpha}$ be the fundamental weight associated to a simple root $\alpha$.
Let $W=W(G,T)$ be the Weyl group, let $W_P$ be the Weyl group of $P$, and $W^P=W/W_P$ the 
set of minimal length representatives of the quotient. 
Let $\rho$ be the half-sum of all positive roots, and define $w\star \lambda:=w(\lambda + \rho) -\rho$ for any $w\in W$ and $\lambda$ in the weight lattice $\Lambda$ of $T$. 
Note that the line bundle $\mathcal O_{G/P}(E_1)$ is the homogeneous line bundle 
on $G/P$ correspond to the weight $\sum\limits_{\alpha\in I}d_{\alpha}\varpi_{\alpha}$. 
Then the Borel-Weil-Bott theorem states that $H^i(G/P,\mathcal O_{G/P}(E_1))=0$ 
unless there exists a  $w\in W^P$ of length $i$ such that 
$w\star (\sum\limits_{\alpha\in I}d_{\alpha}\varpi_{\alpha})$ lies in the 
interior of the dominant chamber, and in that case $H^{i}(G/P, \mathcal O_{G/P}(E_1))$ is the dual 
of the irreducible 
heighest weight module $V_{w\star (\sum\limits_{\alpha\in I}d_{\alpha}\varpi_{\alpha})}$.

On the other hand, we can describe the cohomology of line bundles on the smooth projective 
toric fibre by a result of Demazure (we refer to  \cite[Chapter 9]{Cox} or \cite[Section 3.4]{Ful} 
for more details). 
Let $N$ be the lattice of one-parameter subgroups of the torus $T'=P/H$ and let $M$ be 
 the lattice of characters of $T'$. Let $M_{\mathbb R}:=M\otimes \mathbb R$ and 
 $N_{\mathbb R}:=N\otimes \mathbb R$. Then we have a natural bilinear pairing 
  $\langle-,-\rangle:M_{\mathbb R}\times N_{\mathbb R}\to \mathbb R.$
Let $\Delta$ be the fan of the toric variety $Y$. 
By completeness of $X$, $\Delta$ is a complete fan in $N_{\mathbb R}$, that is $\cup_{\sigma\in \Delta}\sigma =N_{\mathbb R}$. 
We denote by $\Delta(l)$ the set of $l$-dimensional cones in $\Delta$. 
Then we can parametrize $T'$-invariant prime divisors of $Y$ by the set $\Delta(1)$.
Since the cohomology spaces $H^i(Y,{\mathcal O}_Y(E'_2))$ are $T'$-modules, 
these spaces have a weight decomposition: 
$$H^i(Y,{\mathcal O}_Y(E'_2))=\bigoplus_{m\in M} H^i(Y,{\mathcal O}_Y(E'_2))_m$$
and by a theorem of Demazure its degree $m$ part is described as follows: Let 
$Z_{E'_2}(m):=\{v\in N_{\mathbb{R}}, \langle m,v\rangle \geq \psi_{E'_2}(v)\}$, 
where $\psi_{E_2'}$ is the support function corresponding to the divisor $E_2'$ 
(for definition of the support function of a divisor we refer to \cite[Theorem 4.2.12]{Cox}).
Then
\[
H^i(Y,{\mathcal O}_Y(E'_2))_m=H^i_{Z_{E'_2}(m)}(N_{\mathbb{R}},\mathbb{C}).
\] 
Now we can reformulate our result as follows:
\begin{corollary*}
Let $X$ be a smooth complete horospherical variety. Then
\item[1)] If there is no $w\in W^P$ such that $w \star (\sum\limits_{\alpha\in I}d_{\alpha}\varpi_{\alpha})$ lies in the interior of the dominant chamber, then $H^n(X,\mathcal{O}_X(p^*E_1 + E_2))=0$ for all $n$.
\item[2)] If there exists $w\in W^P$ such that $w \star (\sum\limits_{\alpha\in I}d_{\alpha}\varpi_{\alpha})$ lies in the interior of the dominant chamber, such a $w$ is unique, and as a $\mathfrak{g}\times T'$-module we have 
\[
H^{l(w)+q}(X,\mathcal{O}_X(p^*E_1 + E_2)) = V_{w\star (\sum\limits_{\alpha\in I}d_{\alpha}\varpi_{\alpha})}\otimes \bigoplus\limits_{m\in M}H^q_{Z_{E_2}(m)}(N_{\mathbb{R}},\mathbb{C})\]
and $H^i(X,\mathcal{O}_X(p^*E_1 + E_2))=0$ for $i<l(w)$.
\end{corollary*}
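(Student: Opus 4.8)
The plan is to read this off from Corollary~\ref{horospherical} by substituting the standard descriptions of the two tensor factors, after first reducing to the toroidal case. If $X$ is smooth and complete but not toroidal, Proposition~\ref{nontoroidalcase} furnishes a $G$-equivariant proper birational morphism from a smooth complete toroidal horospherical variety and identifies the line-bundle cohomology of $X$ with that of its toroidal resolution (compatibly with the $\mathfrak g$- and $T$-actions); the $B$-stable representative $p^*E_1+E_2$ together with the combinatorial data $(I,\{d_\alpha\},\Delta,\psi_{E_2})$ is then read off on the resolution. So we may assume $X$ itself is smooth, complete and toroidal, with $D\approx p^*E_1+E_2$, and we set $\lambda:=\sum_{\alpha\in I}d_\alpha\varpi_\alpha$, so that $\mathcal O_{G/P}(E_1)=\mathcal L_\lambda$.

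By Corollary~\ref{horospherical} we have, non-canonically as $\mathfrak g\times T'$-modules,
\[
H^n(X,\mathcal O_X(p^*E_1+E_2))\;\simeq\;\bigoplus_{p+q=n}H^p\!\big(G/P,\mathcal L_\lambda\big)\otimes H^q\!\big(Y,\mathcal O_Y(E'_2)\big).
\]
Now apply the Borel--Weil--Bott theorem to the first factor, as recalled above. If there is no $w\in W^P$ with $w\star\lambda$ in the interior of the dominant chamber, then $H^p(G/P,\mathcal L_\lambda)=0$ for every $p$, hence every summand on the right vanishes and $H^n(X,\mathcal O_X(p^*E_1+E_2))=0$ for all $n$; this is the first assertion. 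Otherwise such a $w$ exists and is unique, $H^p(G/P,\mathcal L_\lambda)$ is concentrated in the single degree $p=l(w)$, where it is the irreducible module $V_{w\star\lambda}$ (as a $\mathfrak g$-module), and the direct sum collapses: for a fixed $n$ only the term $(p,q)=(l(w),n-l(w))$ can survive. In particular $H^i(X,\mathcal O_X(p^*E_1+E_2))=0$ for $i<l(w)$, while for $i=l(w)+q$ with $q\ge 0$ we get
\[
H^{l(w)+q}(X,\mathcal O_X(p^*E_1+E_2))\;\simeq\;V_{w\star\lambda}\otimes H^q\!\big(Y,\mathcal O_Y(E'_2)\big),
\]
with $\mathfrak g$ acting through the first factor and $T'$ through the second, the two structures being compatible by the Remark following Corollary~\ref{horospherical}.

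It remains to expand the toric factor. Since $X$ is complete, $Y$ is a smooth projective toric variety and $\mathcal O_Y(E'_2)$ is $T'$-linearized, so $H^q(Y,\mathcal O_Y(E'_2))=\bigoplus_{m\in M}H^q(Y,\mathcal O_Y(E'_2))_m$, and by Demazure's theorem $H^q(Y,\mathcal O_Y(E'_2))_m=H^q_{Z_{E'_2}(m)}(N_{\mathbb R},\mathbb C)$ with $Z_{E'_2}(m)=\{v\in N_{\mathbb R}:\langle m,v\rangle\ge \psi_{E'_2}(v)\}$. Substituting this weight decomposition into the previous display gives exactly
\[
H^{l(w)+q}(X,\mathcal O_X(p^*E_1+E_2))\;=\;V_{w\star\lambda}\otimes\bigoplus_{m\in M}H^q_{Z_{E_2}(m)}(N_{\mathbb R},\mathbb C)
\]
as a $\mathfrak g\times T'$-module, which is the second assertion. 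All the genuine content has already been expended on Corollary~\ref{horospherical} (the Grothendieck--Cousin and local-cohomology Künneth machinery, Theorem~\ref{kunneth-local}); the present argument is essentially bookkeeping, and the only points needing care are the reduction to the toroidal case (handled by Proposition~\ref{nontoroidalcase}) and keeping the highest-weight/duality convention in Borel--Weil--Bott consistent with the statement — if one normalizes so that $H^{l(w)}(G/P,\mathcal L_\lambda)$ is the dual module, then $V_{w\star\lambda}$ should be replaced by its dual throughout, which does not affect the vanishing statements.
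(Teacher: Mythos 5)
Your proposal is correct and follows exactly the route the paper intends: reduce to the toroidal case via Proposition \ref{nontoroidalcase}, apply Corollary \ref{horospherical}, and then substitute the Borel--Weil--Bott description of $H^p(G/P,\mathcal L_\lambda)$ (concentrated in degree $l(w)$ for the unique $w$, or identically zero) and Demazure's weight-space description of $H^q(Y,\mathcal O_Y(E'_2))$. Your remark about the dual/highest-weight convention in Borel--Weil--Bott is a fair catch of an inconsistency in the paper's own phrasing, but it does not affect the substance of the argument.
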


This paper is organized as followed. Section 2 will be a remainder on horospherical varieties. Section 3 is devoted to proving Theorem \ref{kunneth-local}. In Section 4, we shall recall the 
Cousin complexes and show how to compute, under nice assumptions, the cohomology of line bundles on some locally trivial fibrations. The behaviour of the obtained isomorphisms with respect to actions of 
algebraic groups will be investigated in Section 5, and we shall prove Corollary \ref{horospherical} in Section 6. It should be noted that results of Sections 3 to 5 are characteristic free.

\section{Preliminaries on horospherical varieties}

In this section we recall some basic definition and properties of complete horospherical varieties. 
For more details we refer to \cite{Pa} and \cite{perrin2014geometry}.

Let $G$ be a connected reductive algebraic group over the field of complex numbers. A $G$-variety is a reduced scheme of finite type over the field of complex numbers, with an algebraic action of $G$.
Let $B$ be a Borel subgroup of $G$.

\begin{definition}
 A spherical variety is a normal $G$-variety $X$ such that it has a dense open $B$-orbit.
\end{definition}

These varieties have many interesting properties, for example, there are only finitely many $B$-orbits (see \cite{brion1986quelques}).
Here, we consider a special class of spherical varieties, called horospherical varieties. 

\begin{definition}
 Let $H$ be a closed subgroup of $G$. Then $H$ is said to be horospherical if it contains the unipotent radical of a Borel subgroup of $G$.
 \end{definition}
We also say that the homogeneous space $G/H$ is horospherical if $H$ is horospherical. Up to conjugation, we can assume that $H$ contains $U$, the maximal unipotent subgroup of $G$ contained in 
the Borel subgroup $B$. 
Let $P$ be the normalizer $N_{G}(H)$ of $H$ in $G$. It is a parabolic subgroup of $G$, and $T':=P/H$ is a torus.
Then it is clear that $G/H$ is a torus bundle over the flag variety $G/P$ with fibers $P/H$.
\begin{definition}
 Let $X$ be a $G$-spherical variety and let $H$ be the stabilizer of a point in the dense $G$-orbit in $X$.
Then we say $X$ is horospherical if $H$ is horospherical subgroup of $G$.
\end{definition}

A geometric characterization can be given as follows (see for example, \cite{Pa}):
Let $X$ be a normal $G$-variety. Then $X$ is horospherical if there exists a parabolic subgroup $P$ of $G$, and a smooth toric $T'$-variety $Y$ such that there is a diagram of $G$-equivariant morphisms 
\begin{center}
\begin{tikzcd}
G\times^P Y \arrow{d}{p}\arrow{r}{\pi}	& X\\
G/P		&
\end{tikzcd}
\end{center}
\noindent where $G\times^P Y:=(G\times Y)/P$, the action of $P$ is given by $p\cdot (g, y)=(gp, p^{-1}\cdot y)$, $\pi$ is birational and proper (ie. a resolution of singularities), and $p$ is a locally trivial fibration with fibers isomorphic to $Y$.
The classification of horospherical varieties can be given by using colored fans, see \cite{Pa} for precise details.
\subsection{Divisors of horospherical varieties}
In this subsection we recall some properties of divisors of a spherical variety $X$. We start by the following result from \cite{brion1989groupe}:
\begin{proposition}
 Any divisor of $X$ is linearly equivalent to a $B$-stable divisor. 
\end{proposition}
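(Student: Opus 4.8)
The plan is to reduce the statement to the vanishing of the divisor class group of the open $B$-orbit. Being spherical, $X$ is irreducible and carries a dense open $B$-orbit $X_0\simeq B/B_0$, where $B_0$ denotes the stabilizer in $B$ of a point of $X_0$. The complement $X\setminus X_0$ is closed and $B$-stable, and since $B$ is connected it fixes every irreducible component of $X\setminus X_0$; let $D_1,\dots,D_r$ be those components that have codimension one, so that each $D_i$ is a $B$-stable prime divisor. Since every Weil divisor on $X$ is a $\mathbb Z$-linear combination of prime divisors, and a $\mathbb Z$-linear combination of $B$-stable divisors is again $B$-stable, it is enough to prove the proposition when $D$ is a single prime divisor.

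First I would dispose of the easy case: if $D\subseteq X\setminus X_0$ then $D$ is one of the $D_i$ and is already $B$-stable. Otherwise $D\cap X_0$ is a nonzero prime divisor on $X_0$, and the crux of the argument is to exhibit a rational function $f\in\mathbb C(X_0)^{\times}=\mathbb C(X)^{\times}$ with $\operatorname{div}_{X_0}(f)=D\cap X_0$; that is, to show $\operatorname{Cl}(X_0)=0$. Granting this, the Weil divisor $D-\operatorname{div}_X(f)$ on $X$ restricts to $D\cap X_0-\operatorname{div}_{X_0}(f)=0$ on $X_0$, so its support lies in $X\setminus X_0$, and therefore $D-\operatorname{div}_X(f)=\sum_i n_iD_i$ for suitable integers $n_i$; this last divisor is $B$-stable and linearly equivalent to $D$, as wanted.

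It thus remains to check that $\operatorname{Cl}(B/B_0)=0$, and this is the step where the solvability of $B$ is genuinely needed and where I expect the only real difficulty to lie. Here one uses the classical fact that a homogeneous space of a connected solvable group over $\mathbb C$ is isomorphic, as a variety, to a product $\mathbb A^{a}\times(\mathbb G_m)^{b}$ — an open subvariety of affine space — whose divisor class group is trivial. I would either invoke this directly, or prove it by induction on $\dim B$, using the structure theory of connected solvable groups (a filtration with successive quotients isomorphic to $\mathbb G_a$ or $\mathbb G_m$) together with the observation that $B\simeq U\rtimes T$ is itself isomorphic, as a variety, to such a product, and then transferring the triviality of $\operatorname{Cl}(B)$ to the quotient $B/B_0$. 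Once this structural input is in place the rest of the argument is formal; as a by-product one also sees that $\operatorname{Cl}(X)$ is generated by the finitely many $B$-stable prime divisors contained in $X\setminus X_0$, which is the point of departure for the finer description of divisors on horospherical varieties recalled in the remainder of this section.
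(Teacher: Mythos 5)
Your argument is correct, and it is essentially the standard proof of this fact: the paper itself gives no proof but simply cites Brion's \emph{Groupe de Picard et nombres caract\'eristiques des vari\'et\'es sph\'eriques}, where the statement is established exactly along your lines (restrict to the open $B$-orbit, use that a homogeneous space under a connected solvable group is isomorphic as a variety to $\mathbb A^a\times(\mathbb G_m)^b$ and hence has trivial divisor class group, and absorb the difference into the $B$-stable boundary divisors). The one external input, Rosenlicht's structure theorem for orbits of connected solvable groups, is correctly identified and is indeed the classical fact that makes the argument work.
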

 Let $X$ be a  horospherical variety. Now we consider the $B$-stable prime divisors of $X$. We denote by $X_1,X_2,\ldots, X_r$ the $G$-stable divisors of $X$. The other $B$-stable divisors 
 (i.e. those are not $G$-stable) are the closures of $B$-stable divisors 
 of $G/H$, which are called the colors of $G/H$. These are the inverse images of the torus fibration $G/H\longrightarrow G/P$ of the Schubert divisors of the flag variety $G/P$.
 Now we recall the description of the Schubert divisors of $G/P$. Fix a maximal torus $T$ of $B$. Then we denote by $S$ the set of simple roots of $G$ with respect to $B$ and $T$.
Also denote by $S_P$ the subset of $S$ of simple roots of $P$, that is simple roots of the Levi factor $L_P$ of $P$.   
Then the Schubert divisors of $G/P$ are indexed by the subset of simple roots $S\setminus S_P$ and are of the form 
$\overline{Bw_0s_{\alpha}P/P}$, where $w_0$ denotes the longest element of the Weyl group of $G$ and $s_{\alpha}$ denotes the simple reflection associated to the simple root $\alpha$.
Hence the $B$-stable irreducible divisors of $G/H$ are of the form $\overline{Bw_0s_{\alpha}P/H}$, which we denote by $D_{\alpha}$ for $\alpha\in S\setminus S_P$. 
Then the following holds:
\begin{lemma}
 Any divisor $D$ of $X$ equivalent to a linear combination of $X_i$'s and $D_{\alpha}$ for $\alpha\in S\setminus R$. That is,
 $$D\approx \sum_{i=1}^rd_iX_i+\sum_{\alpha\in S\setminus S_P}d_{\alpha}D_{\alpha}.$$
\end{lemma}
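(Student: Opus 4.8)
The plan is to reduce at once to the $B$-stable case using the preceding Proposition, and then to enumerate the $B$-stable prime divisors of $X$ explicitly. First, given a Weil divisor $D$ on $X$, the Proposition furnishes a $B$-stable Weil divisor $D'$ with $D\approx D'$; writing $D'$ as a $\mathbb Z$-linear combination of $B$-stable prime divisors of $X$, the statement is reduced to showing that every $B$-stable prime divisor of $X$ is either one of the $G$-stable prime divisors $X_1,\dots,X_r$ or one of the colors $D_\alpha$ with $\alpha\in S\setminus S_P$.

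Second, I would split the $B$-stable prime divisors of $X$ according to whether they meet the open orbit $G/H$. The complement $X\setminus(G/H)$ is closed and $G$-stable, and since $G$ is connected each of its irreducible components is $G$-stable; hence a $B$-stable prime divisor of $X$ that misses $G/H$ is contained in --- and, by a dimension count, equal to --- a codimension-one component of $X\setminus(G/H)$, that is, it is one of the $X_i$. On the other hand, if a $B$-stable prime divisor $D'$ meets $G/H$, then $D'\cap(G/H)$ is a $B$-stable prime divisor of $G/H$ whose closure in $X$ is $D'$; so it remains to see that the $B$-stable prime divisors of $G/H$ are precisely the colors $D_\alpha$, $\alpha\in S\setminus S_P$. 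Here I would use the torus bundle $q\colon G/H\to G/P$ and the Bruhat decomposition of $G/P$: since $G/H$ is spherical it has a unique dense $B$-orbit $\mathcal O$, whose complement is the union of the $B$-stable prime divisors, and one checks (using the structure of $H$, i.e.\ that $q$ is trivial over the big Bruhat cell $\mathcal U\subseteq G/P$ and that $B$ is transitive on $q^{-1}(\mathcal U)$, so $q^{-1}(\mathcal U)=\mathcal O$) that every $B$-stable prime divisor of $G/H$ is the $q$-preimage of a $B$-stable prime divisor of $G/P$. By the Bruhat decomposition these are the Schubert divisors $\overline{Bw_0s_\alpha P/P}$ for $\alpha\in S\setminus S_P$, and their preimages are by definition the $D_\alpha$; combining the two families gives the full list of $B$-stable prime divisors of $X$, hence the asserted linear equivalence.

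The bookkeeping --- reading off the coefficients $d_i$ and $d_\alpha$, and the remark that the equivalence need not be unique (since the $X_i$ and $D_\alpha$ satisfy linear relations) --- is then immediate. The only point needing some care is the identification of the colors of $G/H$ with the preimages of the Schubert divisors, the essential content being that a $B$-stable divisor of $G/H$ cannot dominate $G/P$ because $B$ already acts with a dense orbit on the restriction of the torus bundle over the big Bruhat cell; this is standard in the theory of horospherical varieties, so in the write-up I would simply invoke it (see \cite{Pa}).
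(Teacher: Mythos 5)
Your argument is correct and follows exactly the route the paper intends: the paper gives no explicit proof, but its preceding discussion (Brion's proposition reducing to $B$-stable divisors, plus the enumeration of $B$-stable prime divisors as the $G$-stable $X_i$ and the colors $D_\alpha$, the latter being the preimages of the Schubert divisors of $G/P$) is precisely your argument. Your write-up merely supplies the details --- in particular the transitivity of $B$ on the preimage of the big cell, which uses $P=TH$ --- that the paper leaves implicit with a reference to \cite{Pa}.
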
 

A horospherical variety is called toroidal if it has no color containing a $G$-orbit. It is of the form $G\times^P Y$, where $Y$ is as above. We summarize the desciption of smooth toroidal varieties that will be 
used later with the following proposition (and we refer to \cite[prop. 2.2 and ex. 2.3]{Pa} for its proof):
\begin{proposition}
Let $X$ be a smooth toroidal horospherical variety, and let $H$ be the stabilizer of a point in the open $B$-orbit of $X$.
\item (i) There is a parabolic subgroup $P$ containing $B$ such that 
\[
H=\bigcap\limits_{\lambda\in {\mathcal X}(P)}\ker(\lambda)
\]
where ${\mathcal X}(P)$ denotes the characters of $P$. Moreover, $P=N_G(H)$, and $T':= P/H$ is a torus.
\item (ii) There is a smooth $T'$-toric variety $Y$ such that $X=G\times^P Y$, and the natural map $X=G\times^P Y\to G/P$ is a locally trivial fibration, where the action of $P$ on $Y$ via $P\to T'$.
\item (iii)  $G\times T'$ acts on $X=G\times^P Y$ via 
\begin{equation}\label{action}
 (g,pH).[h, y]= [gh, p\cdot y]
\end{equation}
  for all $g, h \in G$ and $y\in Y$.

\end{proposition}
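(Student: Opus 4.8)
The plan is to prove the three assertions in sequence, taking as the single input the structure theory of subgroups of $G$ containing the maximal unipotent subgroup $U$, together with the associated-bundle formalism for $G/P$.

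For part (i) I would first normalise the base point: since $X$ is spherical, the open $B$-orbit lies inside the open $G$-orbit, and a point $x_0$ in it can be chosen so that its $G$-stabiliser $H$ contains $U$. Since $H\supseteq U$, a standard result on subgroups containing a maximal unipotent (see \cite{Pa}) shows that $P:=N_G(H)$ is a parabolic subgroup containing $B$; conjugation by $U\subseteq H$ fixes $H$, and conjugation by $T$ preserves it, so $B=TU\subseteq N_G(H)$, and any subgroup containing a Borel is parabolic. Since $H\supseteq U\supseteq R_u(P)$ and $H\trianglelefteq P$, the quotient $P/H$ is a quotient of $P/R_u(P)\cong L$; the horospherical hypothesis forces it to be abelian and free of unipotent elements, hence a torus $T'$. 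It then remains to recognise $H$ as a common kernel of characters: the characters of $P$ that are trivial on $H$ are exactly those descending to $T'=P/H$, namely $\mathcal X(P/H)=\mathcal X(T')$, and their common kernel is $H$ by construction. The content of the statement is that these exhaust $\mathcal X(P)$, equivalently that every $\lambda\in\mathcal X(P)$ kills $H$, so that $H=\bigcap_{\lambda\in\mathcal X(P)}\ker\lambda$. I expect this last bookkeeping—pinning down the weight lattice of $T'$ inside $\mathcal X(P)$ while simultaneously confirming $P=N_G(H)$—to be the main technical obstacle, and I would carry it out using the colored-fan description of horospherical homogeneous spaces in \cite{Pa}.

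For part (ii) I would construct $Y$ as the scheme-theoretic fibre of the projection $p\colon X\to G/P$ over the base point $eP$. Its stabiliser $P$ acts on this fibre, and because $P$ acts only through $P/H=T'$, the fibre is a $T'$-variety in which the image of the open orbit $P/H\cong T'$ is open and dense; normality and smoothness of $X$ make $Y$ a smooth $T'$-toric variety. The canonical morphism $G\times^P Y\to X$, $[g,y]\mapsto g\cdot y$, is then $G$-equivariant and bijective onto $X$, with inverse built from a local section of $G\to G/P$. Local triviality of $p$ is inherited from that of $G\to G/P$: over the big cell $R_u(P^-)\cdot eP$ the latter admits a section and thus trivialises the associated bundle as $R_u(P^-)\times Y$, and the $G$-translates of such cells cover $G/P$.

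For part (iii) I would exhibit the two commuting actions on $G\times^P Y$ directly: $G$ by left translation $g\cdot[h,y]=[gh,y]$, and $T'=P/H$ by $pH\cdot[h,y]=[h,p\cdot y]$. The only points to check are well-definedness and commutativity. The $T'$-action is well defined on classes because $P$ acts on $Y$ through the abelian group $T'$: for $p'\in P$ one has $p\cdot(p'^{-1}y)=p'^{-1}\cdot(p\cdot y)$, so $p$ is compatible with the defining relation $[h,y]=[hp',p'^{-1}y]$, and $H$ acts trivially, so $pH$ is unambiguous. Composing the two commuting actions yields $(g,pH)\cdot[h,y]=[gh,p\cdot y]$, which is exactly \eqref{action}. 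These verifications are routine once the associated-bundle relation is in hand, so the real weight of the proposition lies in the character-theoretic identification carried out in part (i).
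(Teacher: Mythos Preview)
The paper does not actually prove this proposition: immediately before stating it the authors write ``we refer to \cite[prop.~2.2 and ex.~2.3]{Pa} for its proof'', and no argument is given. Your proposal is therefore not a reconstruction of the paper's proof but an independent sketch of Pasquier's argument, so it necessarily takes a different route.

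As such a sketch it is largely sound. Parts (ii) and (iii) are the standard associated-bundle verifications and your outline is correct. For part (i) you correctly locate the substance in the character-theoretic identification of $H$, and your plan to extract it from the colored-fan classification in \cite{Pa} is the right one. Two cautions. First, the assertion ``conjugation by $T$ preserves $H$'' does not follow from $H\supseteq U$ alone and is precisely part of what the structural result in \cite{Pa} delivers, so invoke that result there rather than argue ad hoc. Second, the literal identity $H=\bigcap_{\lambda\in\mathcal X(P)}\ker\lambda$, with the intersection taken over \emph{all} of $\mathcal X(P)$, forces $P/H$ to have maximal rank and is not what Pasquier proves in general (his statement uses a sublattice $M\subseteq\mathcal X(P)$; already $X=G/B$ gives, after normalising so that $H\supseteq U$, the group $H=B$ while $\bigcap_{\lambda}\ker\lambda=U$). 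Your proposed method would surface this discrepancy, so the approach is fine even if the target formula needs a minor correction.
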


\subsection{Local structure of toroidal varieties}
In this subsection, we recall the local structure of a toroidal horospherical
variety $X$. We keep the notations of the previous subsection. 
Let $P^-$ be the parabolic subgroup opposite to $P$, and let $L_P=P\cap P^-$ be the Levi factor of $P$ containing $T$. 
Notice that there is a quotient map $L_P\to T'$, whose kernel is denoted by $L_0$. 
We denote by ${\mathcal D}(X)$ the set of colors of $X$, and define
\[
X_0:=X\setminus\bigcup\limits_{D\in{\mathcal D}(X)}D
\]
Then $X_0$ is a $P$-stable open subset such that $X$ is covered by the 
$G$-translates of $X_0$, and the local structure theorem describes precisely $X_0$:
\begin{proposition}\label{local_structure}\

\begin{enumerate}
 \item 
There exists a closed $L_P$-stable subvariety $Z$ of $X_0$, fixed pointwise by $L_0$, such that
\[
X_0\simeq P^-\times^{L_P} Z\simeq R^u(P^-)\times Z
\]
where $R^u(P^-)$ is the unipotent radical of $P^-$. Moreover, $Z$ is isomorphic to the $T'$-toric variety $Y$, 
it is defined by the same fan as $X$, and any $G$-orbit intersect $Z$ along a unique $T'$-orbit.
\item   The action of $B\times T'$ on $X=G\times^P Y$ (see \ref{action}) stabilizes $X_0$ and, the above isomorphism is $B\times T'$-equivariant (where the action on $R^u(P^-)\times Z$ is the product action).  
\end{enumerate}
\end{proposition}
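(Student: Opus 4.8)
The plan is to reduce the statement to the classical local structure of the flag variety $G/P$, exploiting that a toroidal horospherical variety is an honest fibre bundle over $G/P$. Since $X$ is toroidal, $X = G\times^P Y$ and $p\colon X\to G/P$ is the associated locally trivial bundle with fibre the toric $T'$-variety $Y$. By the description of colors recalled above, every color of $X$ is the closure of a color of $G/H$, and the colors of $G/H$ are the preimages, under the torus fibration $G/H\to G/P$, of the Schubert divisors of $G/P$; since $p$ factors that fibration, each $D\in\mathcal{D}(X)$ equals $p^{-1}$ of a Schubert divisor. Hence $X_0 = p^{-1}(\Omega)$, where $\Omega\subset G/P$ is the complement of all Schubert divisors, that is, the big open cell. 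By the Bruhat decomposition $\Omega$ is a single orbit of $P^-$, indeed of its unipotent radical: writing $o$ for the relevant base point, $\mathrm{Stab}_{P^-}(o) = L_P$ and $R_u(P^-)\to\Omega$, $u\mapsto u\cdot o$, is an isomorphism of varieties, so $\Omega$ is $P^-$-stable. (The colors being stable under the Borel forces $X_0$, hence $\Omega$, to be $P$-stable as well, which pins $\Omega$ down.)

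Next I would transport this product structure along $p$. The principal $P$-bundle $G\to G/P$ is trivial over the big cell, the composite $R_u(P^-)\xrightarrow{\ \sim\ }\Omega\hookrightarrow G$ being a section; so $G|_\Omega\cong R_u(P^-)\times P$, and passing to associated bundles with fibre $Y$ gives $X_0 = (G\times^P Y)|_\Omega\cong R_u(P^-)\times Y$, that is $X_0\cong P^-\times^{L_P}Z$ with $Z := p^{-1}(o)$. Unwinding this, $Z$ is the closed copy $\{[e,y]:y\in Y\}$ of $Y$ in $X = G\times^P Y$; it is stabilized by $L_P = \mathrm{Stab}_{P^-}(o)$, and the $L_P$-action on $Z$ factors through $L_P\hookrightarrow P\twoheadrightarrow P/H = T'$ and coincides with the toric action, so $L_0 = \ker(L_P\to T')$ fixes $Z$ pointwise and $Z\cong Y$ as $T'$-toric varieties with the same fan. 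For the final clause of (1), the $G$-orbits of $G\times^P Y$ are the sets $G\cdot[e,y]$, which correspond to the $P$-orbits of $Y$ and hence to the $T'$-orbits (as $P$ acts through $T'$); and $G\cdot[e,y]\cap Z = \{[e,py]:p\in P\}$ is exactly one $T'$-orbit of $Z$.

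For part (2), the action (\ref{action}) of $B\times T'$ translates the $G$-factor by $B$ and acts on the $Y$-factor through $T'$; since it preserves the colors (Borel-stable and unaffected by $T'$) it preserves $X_0$. To see that the above isomorphism is $B\times T'$-equivariant for the product action, I would use $P^- = R_u(P^-)\rtimes L_P$ and the corresponding decomposition of the Borel contained in $P^-$: an element $u\ell$ (with $u\in R_u(P^-)$, $\ell\in L_P$) acts on $R_u(P^-)\times Z$ by $(v,z)\mapsto(u\cdot{}^{\ell}v,\,\ell\cdot z)$, the first component being the natural action on $R_u(P^-)\cong P^-/L_P$ and the second the action on $Z$ induced by the homomorphism to $T'$, while $T'$ acts trivially on $R_u(P^-)$ and torically on $Z$; combining this with the section chosen above shows that the trivialization intertwines (\ref{action}) with a product action.

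I expect part (2) to be the main obstacle. On $X_0$ we have the $P^-$-action furnished by the local structure and the $B\times T'$-action of (\ref{action}), and although both are assembled from the same ingredients they do not literally agree, so one must verify carefully — through the Levi decompositions and the explicit choice of section — that the chosen trivialization of $X_0$ turns (\ref{action}) into a product action, keeping careful track of which Borel and which parabolic enter. Identifying the fan of $Z$ with that of $X$ in the previous step likewise needs a little unwinding of the definition $X = G\times^P Y$, but is routine.
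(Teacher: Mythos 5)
First, a point of comparison: the paper does not prove this proposition at all --- it is imported from Brion's valuations paper and Timashev's book. Your strategy (reduce to the big cell of $G/P$, trivialize the principal bundle $G\to G/P$ there, and transport the product structure to $X=G\times^P Y$) is the right one and is essentially how those sources argue, so your write-up is in principle more self-contained than the paper. The identification of $Z$ with $Y$ and its fan, and the matching of $G$-orbits of $X$ with $T'$-orbits of $Z$, are handled correctly.

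There is, however, one concrete step that fails as written, and it is precisely the ``which parabolic, which base point'' issue you defer to part (2). With the paper's convention the colors lie over the Schubert divisors $\overline{Bw_0s_{\alpha}P/P}$, so $\Omega$ is the open Bruhat cell $Bw_0P/P=w_0\cdot\bigl(R^u(P^-)P/P\bigr)$, whose distinguished point is $w_0P/P$. This cell is \emph{not} the open $P^-$-orbit: since $R^u(P^-)\subset B^-\subset w_0Pw_0^{-1}$, the group $R^u(P^-)$ \emph{fixes} $w_0P/P$, so the orbit map $u\mapsto u\cdot o$ you invoke is constant rather than an isomorphism, and $\mathrm{Stab}_{P^-}(o)\neq L_P$ at that point. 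The group acting simply transitively on $\Omega$ is $R^u(Q)$ for $Q:=w_0P^-w_0^{-1}\supseteq B$, the stabilizer of $\Omega$ is $Q$ (not $P$; one has $P\subseteq Q$ only when $S_P=-w_0(S_P)$, so your parenthetical ``$\Omega$ is $P$-stable'' is false in general), and $Z$ must be taken as the fibre over $w_0P/P$, on which $L_Q=w_0L_Pw_0^{-1}$ acts through $T'$. Everything you wrote becomes correct after conjugating by $w_0$ (equivalently, after replacing the Schubert divisors by the opposite ones $\overline{B^-s_{\alpha}P/P}$), and since $R^u(Q)\simeq R^u(P^-)$ and $L_Q\simeq L_P$ the proposition as stated survives; but the $B\times T'$-equivariance of part (2) only makes sense because $B\subseteq Q$, so the conjugation cannot be swept under the rug --- and your sketch of (2) explicitly stops before carrying it out. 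In fairness, the paper's own formulation (asserting both that $X_0$ is $P$-stable and that $X_0\simeq P^-\times^{L_P}Z$) contains the same unresolved tension; a clean write-up should fix the convention once and track it through both parts.
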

 For a proof we refer to \cite[Proposition 3.4]{brion1987valuations} and also see \cite[Theorem 29.1]{Ti}. 
 Note that in particular, the fibration $P^-\times^{L_P} Z$ is trivial, 
 and that any $G$-stable divisor of $G\times^P Y$ is of the form $G\times^P D'$, 
 where $D'$ is a $T'$-stable divisor of $Y$. From now on, assume that $G\simeq C\times [G,G]$, where $C$ is a torus.
 
 We also have an exact sequence $0\to \textnormal{Pic}(G/P)\to \textnormal{Pic}(X)\to\textnormal{Pic}(Y)\to 0$ (see Prop \ref{Pic}).

Since both $G$ and $P$ are factorial, and since  both $G/P$ and $Y$ are normal, we have the following exact sequences (see \cite[Remark 2.4]{KKLV} and \cite[Proposition 2.10]{brion-lin})
$$0\to {\mathcal X}(G)\to \textrm{Pic}^G(G/P)\to \textrm{Pic}(G/P)\to 0$$ and 
\begin{equation}\label{1}
  0\to {\mathcal X}(P) \to \textrm{Pic}^P(Y)\to \textrm{Pic}(Y)\to 0
  \end{equation}
First observe that $\textrm{Pic}^G(G/P)=\mathcal X(P)$ and by above discussion we can see that $$\textrm{Pic}^G(X)=\textrm{Pic}^G(G\times^P Y)=\textrm{Pic}^P(Y).$$
The natural morphism $X\to G/P$  induces a map $\textrm{Pic}^G(G/P)\to \textrm{Pic}^G(X)$ via pullback.
Then by  exact sequence (\ref{1}) we have the following short exact sequence: 
$$0\to \textrm{Pic}^G(G/P) \to \textrm{Pic}^G(X)\to \textrm{Pic}(Y) \to 0.$$
Since $Y$ is a toric variety, the $\mathbb{Z}$-module $\textrm{Pic}(Y)$ is projective. Hence the above short exact sequence splits and we have 
 $$\textrm{Pic}^{G}(X)\simeq \textrm{Pic}^G(G/P)\oplus\textrm{Pic}(Y)$$

The same section provides a canonical splitting
\begin{equation}\label{equi}
\textrm{Pic}^{G\times T'}(X)=\textrm{Pic}^{G}(G/P)\oplus\textrm{Pic}^{T'}(Y)
\end{equation}

\section{A K\"{u}nneth-like formula for local cohomology}
We start by recalling a few results on local cohomology functors. We refer the reader to \cite{SGA2} and \cite{Ke1} for more details. 
In Sections 3 to 5, for simplicity and unless noted otherwise, all schemes will be supposed to be Noetherian over an algebraically closed field $k$.\\
\indent Let $X=\textrm{Spec}(A)$ be an affine scheme, let $QCoh(X)$ be the category of quasi-coherent ${\mathcal O}_X$-modules, and let $Z\subset X$ be a closed subscheme.
\begin{proposition}
Let ${\mathcal F}\in QCoh(X)$, and $p\geq 0$. The functor ${\mathcal H}^p_Z({\mathcal F}\otimes \bullet) : QCoh(X) \to QCoh(X)$ is additive and commutes with direct limits.
\end{proposition}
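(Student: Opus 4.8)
The plan is to translate the statement into the language of modules over the coordinate ring of $X$ and to reduce it to standard properties of local cohomology of modules. Write $X=\mathrm{Spec}(A)$ with $A$ Noetherian, fix an ideal $I\subseteq A$ such that $Z=V(I)$, and let $N$ be the $A$-module with $\widetilde{N}={\mathcal F}$. Under the equivalence between $QCoh(X)$ and the category of $A$-modules, the functor ${\mathcal H}^p_Z({\mathcal F}\otimes\bullet)$ sends a sheaf $\widetilde{M}$ to the sheaf associated to the local cohomology module $H^p_I(N\otimes_A M)$, since ${\mathcal F}\otimes\widetilde{M}=\widetilde{N\otimes_A M}$ and $\underline{H}^p_Z(\widetilde{P})=\widetilde{H^p_I(P)}$ for every $A$-module $P$. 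Hence it suffices to prove that the functor $M\mapsto H^p_I(N\otimes_A M)$ on $A$-modules is additive and commutes with direct limits.

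Additivity is immediate: the functor $M\mapsto N\otimes_A M$ is additive; the functor $H^p_I(-)$ is the $p$-th right derived functor of the additive functor $\Gamma_I(M)=\{m\in M : I^n m=0\ \text{for}\ n\gg 0\}$, hence additive; and a composite of additive functors is additive. In particular ${\mathcal H}^p_Z({\mathcal F}\otimes({\mathcal G}_1\oplus{\mathcal G}_2))\cong{\mathcal H}^p_Z({\mathcal F}\otimes{\mathcal G}_1)\oplus{\mathcal H}^p_Z({\mathcal F}\otimes{\mathcal G}_2)$.

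For the commutation with direct limits, the functor $N\otimes_A-$ commutes with all colimits, being left adjoint to $\mathrm{Hom}_A(N,-)$, so it is enough to show that $H^p_I(-)$ commutes with filtered colimits. For this one uses the identification $H^p_I(M)=\varinjlim_n\mathrm{Ext}^p_A(A/I^n,M)$. Since $A$ is Noetherian, each $A/I^n$ admits a resolution $F_\bullet\to A/I^n$ by finitely generated free $A$-modules, so $\mathrm{Ext}^p_A(A/I^n,M)=H^p(\mathrm{Hom}_A(F_\bullet,M))$; as $\mathrm{Hom}_A(A^{\oplus r},-)\cong(-)^{\oplus r}$ commutes with filtered colimits, and as the cohomology of a complex, the outer colimit $\varinjlim_n$, and filtered colimits of $A$-modules (which are exact) all commute with filtered colimits, one gets $\mathrm{Ext}^p_A(A/I^n,\varinjlim_\alpha M_\alpha)\cong\varinjlim_\alpha\mathrm{Ext}^p_A(A/I^n,M_\alpha)$ and then $H^p_I(\varinjlim_\alpha M_\alpha)\cong\varinjlim_\alpha H^p_I(M_\alpha)$. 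Composing with $N\otimes_A-$ gives the proposition. The one non-formal ingredient is the commutation of $\mathrm{Ext}^p_A(A/I^n,-)$ with direct limits, which is exactly the place where the Noetherian hypothesis on $X$ enters (it provides a finite-type free resolution of $A/I^n$); this is the step to be careful about, the rest being categorical generalities.
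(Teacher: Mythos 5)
Your proof is correct, but it takes a genuinely different route from the paper. The paper's proof is sheaf-theoretic: it observes that on a Noetherian scheme every open subset is quasi-compact, and then invokes Kempf's argument (the one showing that $H^p(X,\bullet)$ commutes with direct limits), adapted to $\mathcal{H}^p_Z$ by computing local cohomology with flabby resolutions. You instead exploit the affineness of $X$ from the start: you pass through the equivalence $QCoh(X)\simeq A\text{-Mod}$, identify $\mathcal{H}^p_Z(\widetilde{P})$ with $\widetilde{H^p_I(P)}$, and then use the Noetherian identification $H^p_I(M)\cong\varinjlim_n\mathrm{Ext}^p_A(A/I^n,M)$ together with the fact that $\mathrm{Ext}^p_A(A/I^n,-)$ commutes with filtered colimits because $A/I^n$ admits a resolution by finitely generated free modules. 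Both arguments use the Noetherian hypothesis in an essential way, but in different places: the paper uses it for quasi-compactness of opens, you use it to get finite free resolutions of $A/I^n$. Your version is more self-contained and explicit (it does not defer to an external reference for the key step), at the cost of being tied to the affine setting; the paper's argument is the one that would survive if one wanted the statement for the relative local cohomology sheaves on a general Noetherian scheme. One small point worth making explicit in your write-up is that the identification $\underline{\mathcal{H}}^p_Z(\widetilde{P})\cong\widetilde{H^p_I(P)}$ and the statement that $\mathcal{H}^p_Z$ lands in $QCoh(X)$ are themselves consequences of the Noetherian affine hypothesis, but this is standard and does not affect the validity of the argument.
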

\begin{proof}
Since we assume $X$ to be Noetherian, all open $U\subset X$ are quasi-compact. 
Then to prove the proposition, one can use the same arguments as in \cite[Section 2]{Ke2}, 
where this is done for $H^p(X,\bullet)$. 
One should recall that, as for ordinary sheaf cohomology, one can compute $H^p_Z(X,\bullet)$ using a flabby resolution.
\end{proof}
We can now prove the following useful lemma.
\begin{lemma}{\label{isomorphism-flat-modules}}
Let ${\mathcal F},{\mathcal G}\in QCoh(X)$. Let $t^p_{{\mathcal F},{\mathcal G}} : {\mathcal H}^p_Z({\mathcal G})\otimes_{{\mathcal O}_X}{\mathcal F}\to 
{\mathcal H}^p_Z({\mathcal G}\otimes_{{\mathcal O}_X}{\mathcal F})$ be the natural map, and let $l$ be the codimension of $Z$ in $X$.
\item (i) If ${\mathcal F}$ is a flat ${\mathcal O}_X$-module, $t^p_{{\mathcal F},{\mathcal G}}$ is an isomorphism for all $p\geq 0$.
\item (ii) Assume that $X$ is Cohen-Macaulay. If ${\mathcal G}$ is a flat ${\mathcal O}_X$-module, and if $Z$ is a locally complete intersection, $t^l_{{\mathcal F},{\mathcal G}}$ is an isomorphism.
\end{lemma}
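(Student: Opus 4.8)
Part (i): For a flat $\mathcal{O}_X$-module $\mathcal{F}$, I would argue that tensoring with $\mathcal{F}$ commutes with the local cohomology functors $\mathcal{H}^p_Z(-)$. The cleanest route: recall that $\mathcal{H}^p_Z(-)$ can be computed from a flabby (e.g. injective) resolution $\mathcal{G}\to \mathcal{I}^\bullet$, and that $\Gamma_Z$ is a direct limit of $\mathcal{H}om$ into quotients $\mathcal{O}_X/\mathcal{I}_Z^n$. Since $\mathcal{F}$ is flat, $-\otimes_{\mathcal{O}_X}\mathcal{F}$ is exact, so it carries the flabby resolution $\mathcal{I}^\bullet$ of $\mathcal{G}$ to a resolution $\mathcal{I}^\bullet\otimes\mathcal{F}$ of $\mathcal{G}\otimes\mathcal{F}$; one must check this stays flabby (or at least acyclic for $\Gamma_Z$) — here I would use that flat quasi-coherent modules over a Noetherian ring are filtered colimits of finite free modules (Lazard), the previous proposition's commutation with direct limits, and the fact that finite free modules obviously preserve flabbiness. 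Then $\mathcal{H}^p_Z(\mathcal{G}\otimes\mathcal{F}) = h^p(\Gamma_Z(\mathcal{I}^\bullet)\otimes\mathcal{F}) = h^p(\Gamma_Z(\mathcal{I}^\bullet))\otimes\mathcal{F} = \mathcal{H}^p_Z(\mathcal{G})\otimes\mathcal{F}$, the middle equality again by flatness (exactness of $-\otimes\mathcal{F}$ commuting with cohomology of a complex), and I would check this composite is the natural map $t^p_{\mathcal{F},\mathcal{G}}$.

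Part (ii): Now $\mathcal{G}$ is flat, $X$ is Cohen-Macaulay, $Z\subset X$ is a local complete intersection of codimension $l$. The key input is that for a local complete intersection $Z$ in a Cohen-Macaulay scheme, $Z$ has no embedded components and all its local rings are Cohen-Macaulay, so $\mathcal{H}^p_Z(\mathcal{O}_X) = 0$ for $p \neq l$; more generally $\mathcal{H}^p_Z(\mathcal{M}) = 0$ for $p < l$ and any coherent $\mathcal{M}$, by the standard depth/grade estimate ($\mathrm{depth}_Z \mathcal{M} \geq l$ locally when $X$ is CM). I would then run a spectral sequence or dévissage argument: write $\mathcal{F}$ as a quotient of a flat (indeed free, locally) module, or resolve locally, and compare $\mathcal{H}^l_Z(\mathcal{G}\otimes\mathcal{F})$ with $\mathcal{H}^l_Z(\mathcal{G})\otimes\mathcal{F}$. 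Concretely, locally take a presentation $\mathcal{O}_X^{\oplus b}\to\mathcal{O}_X^{\oplus a}\to\mathcal{F}\to 0$; tensoring with the flat $\mathcal{G}$ keeps it exact, and applying $\mathcal{H}^\bullet_Z$ and using vanishing below degree $l$ together with right-exactness of $\mathcal{H}^l_Z$ (the top nonvanishing functor, which is right exact on coherent sheaves here) yields that $\mathcal{H}^l_Z(\mathcal{G}\otimes-)$ is right exact and agrees with $\mathcal{H}^l_Z(\mathcal{G})\otimes-$ on free modules by part (i); two right-exact functors agreeing on free modules and compatible with the presentation agree on $\mathcal{F}$, and the comparison map is $t^l_{\mathcal{F},\mathcal{G}}$ by naturality. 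I would need the local-to-global reduction — local complete intersection and Cohen-Macaulay are local conditions, and the statement is local on $X$, so it suffices to treat $X = \mathrm{Spec}(A)$ affine local, where everything becomes a statement about modules and regular sequences.

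The main obstacle I expect is part (ii): establishing the vanishing $\mathcal{H}^p_Z(\mathcal{F}) = 0$ for $p < l$ for \emph{arbitrary} quasi-coherent (not just coherent) $\mathcal{F}$ — this needs the Cohen-Macaulay plus l.c.i. hypotheses genuinely, via the fact that $Z$ is cut out locally by a regular sequence of length $l$ on the CM ring $A$, so the grade of the defining ideal on any module is controlled, and then extending from coherent to quasi-coherent by the direct-limit compatibility from the preceding proposition. Once that vanishing is in hand, the right-exactness of the top local cohomology functor $\mathcal{H}^l_Z$ and the comparison on free modules do the rest fairly mechanically. One should also be a little careful that $t^l_{\mathcal{F},\mathcal{G}}$ really is the natural transformation being compared, i.e. that the isomorphisms constructed are the canonical ones and not merely abstract isomorphisms, which is needed for the Künneth theorem (Theorem \ref{kunneth-local}) that invokes this lemma.
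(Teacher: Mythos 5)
Your part (i) follows the paper's route: Lazard's theorem writes the flat module $\mathcal{F}$ as a filtered colimit of finite free modules, the functor $\mathcal{H}^p_Z(\mathcal{G}\otimes\bullet)$ commutes with filtered colimits by the preceding proposition, and the comparison map is an isomorphism on free modules by the general fact $T(A)\otimes_A L\simeq T(L)$ for an additive functor $T$ and $L$ free. The detour through tensoring an injective resolution is not needed (and a flabby sheaf tensored with a flat module need not remain flabby), but your fallback is exactly the argument that works. Part (ii) also has the paper's mechanism: right-exactness of $\mathcal{H}^l_Z(\mathcal{G}\otimes\bullet)$, a free presentation $\mathcal{L}'\to\mathcal{L}\to\mathcal{F}\to 0$, and comparison of two right-exact functors that agree on free modules by (i).

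There is, however, one genuinely wrong step in your plan for (ii). The statement you single out as ``the key input'' and ``the main obstacle'' --- that $\mathcal{H}^p_Z(\mathcal{M})=0$ for $p<l$ for an \emph{arbitrary} coherent (let alone quasi-coherent) $\mathcal{M}$ --- is false: for $\mathcal{M}=\mathcal{O}_Z$ one has $\mathcal{H}^0_Z(\mathcal{M})=\mathcal{M}\neq 0$. The depth/grade estimate only gives vanishing below $l$ for modules like $\mathcal{O}_X$ (or $\mathcal{G}$ flat of full support), not after tensoring with an arbitrary $\mathcal{F}$. Fortunately this lower vanishing is irrelevant to the lemma. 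What the right-exactness of $\mathcal{H}^l_Z(\mathcal{G}\otimes\bullet)$ actually requires is the vanishing \emph{above} degree $l$: in the long exact sequence attached to $0\to\mathcal{F}_1\to\mathcal{F}_2\to\mathcal{F}_3\to 0$ (which stays exact after tensoring with the flat $\mathcal{G}$), the obstruction to surjectivity of $\mathcal{H}^l_Z(\mathcal{G}\otimes\mathcal{F}_2)\to\mathcal{H}^l_Z(\mathcal{G}\otimes\mathcal{F}_3)$ is $\mathcal{H}^{l+1}_Z(\mathcal{G}\otimes\mathcal{F}_1)$, and this vanishes for \emph{every} quasi-coherent module because the ideal of $Z$ is locally generated by $l$ elements (this is \cite[Lemma 3.12, III]{SGA2}, and is where the locally-complete-intersection hypothesis enters). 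You gesture at this when you call $\mathcal{H}^l_Z$ ``the top nonvanishing functor,'' but you have conflated the two directions of vanishing and located the difficulty in the wrong one. With that correction your d\'evissage goes through and coincides with the paper's proof.
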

\begin{proof}
Proof of (i): We first recall the following facts:
\item (1) If $T: \{\textrm{left} A-\textrm{Mod}\}\to {\mathcal A}b$ is an additive functor, and if $L$ is a free $A$-module, then
\[
T(A)\otimes_A L \to T(L)
\]
is an isomorphism (see \cite[Lemma 7.2.4]{EGA3})
\item (2) (Lazard's theorem) Let $M$ be a $A$-module. Then $M$ is flat if and only if  it is the colimit of a directed system of free finite $A$-modules.\\
Then by flatness of ${\mathcal F}$, ${\mathcal F}(X)$ is flat over $A$, hence ${\mathcal F}(X)$ is a directed colimit of free $A$-modules of finite rank, and ${\mathcal F}$ is a directed 
colimit of free ${\mathcal O}_X$-modules of finite rank. Now since both ${\mathcal H}^p_Z({\mathcal G}\otimes \bullet)$ and global sections commute with direct limits, it is enough to show 
the isomorphism for global sections of free ${\mathcal O}_X$-modules, which is exactly (1).\\
\indent
Proof of (ii):We start by proving that the functor ${\mathcal H}^l_Z({\mathcal G}\otimes \bullet)$ is right exact. Let 
\[
0\to{\mathcal F}_1\to{\mathcal F}_2\to{\mathcal F}_3\to 0
\]
be an exact sequence of quasi-coherent ${\mathcal O}_X$-modules. Since ${\mathcal G}$ is flat, this gives a long exact sequence of local cohomology sheaves
\[
\ldots \to {\mathcal H}^l_Z({\mathcal G}\otimes{\mathcal F}_1)\to {\mathcal H}^l_Z({\mathcal G}\otimes{\mathcal F}_2)\to {\mathcal H}^l_Z({\mathcal G}\otimes{\mathcal F}_3)\to 
{\mathcal H}^{l+1}_Z({\mathcal G}\otimes{\mathcal F}_1)
\]
But the conditions on $Z$ ensure the vanishing of ${\mathcal H}^{l+1}_Z({\mathcal G}\otimes{\mathcal F}_1)$ (see \cite[Lemma 3.12, III]{SGA2}), hence the right-exactness.\\
Now pick an exact sequence ${\mathcal L}'\to {\mathcal L}\to {\mathcal F}\to 0$ with ${\mathcal L}$ and ${\mathcal L}'$ being free ${\mathcal O}_X$-modules. We have a commutative diagram
\begin{center}
\begin{tikzcd}
{\mathcal H}^l_Z({\mathcal G})\otimes{\mathcal L}' \arrow{d}\arrow{r}	& {\mathcal H}^l_Z({\mathcal G})\otimes{\mathcal L} \arrow{d}\arrow{r}	& {\mathcal H}^l_Z({\mathcal G})\otimes{\mathcal F} \arrow{d}\arrow{r}	& 0 \\
{\mathcal H}^l_Z({\mathcal G}\otimes{\mathcal L}') \arrow{r}			& {\mathcal H}^l_Z({\mathcal G}\otimes{\mathcal L}) \arrow{r}	& {\mathcal H}^l_Z({\mathcal G}\otimes{\mathcal F}) \arrow{r}	& 0 \\
\end{tikzcd}
\end{center}
Since ${\mathcal L}$ and ${\mathcal L}'$ are free, the two first vertical maps are isomorphisms, so is the third one.
\end{proof}
\begin{remark}
Any invertible sheaf ${\mathcal L}$ on $X= \textrm{Spec}(A)$ is flat.
\end{remark}
Before stating our K\"{u}nneth-like formula for local cohomology, let us recall a few known results on local cohomology (which hold in much more generality).
\begin{lemma}
Let ${\mathcal F}\in QCoh(X)$.
\item (i) If $Z_2\subset Z_1$ are two closed subschemes of $X$ such that $Z_1\setminus Z_2$ is affine, then
\[
H^i_{Z_1/Z_2}(X,{\mathcal F}) \simeq H^0(X,{\mathcal H}^i_{Z_1/Z_2}({\mathcal F}))
\]
\item (ii) Let $Z_2\subset Z_1$, $W_2\subset W_1$ be closed subschemes, and let $S_1=W_1\cap Z_1$, $S_2=(W_1\cap Z_2)\cup (W_2\cap Z_1)$. Then we have a spectral sequence
\[
E_2^{p,q}={\mathcal H}^p_{W_1/W_2}({\mathcal H}^q_{Z_1/Z_2}({\mathcal F}))\Rightarrow {\mathcal H}^*_{S_1/S_2}({\mathcal F})
\]
In particular, when $Z_2=W_2=\emptyset$, we have
\[
E_2^{p,q}={\mathcal H}^p_{W_1}({\mathcal H}^q_{Z_1}({\mathcal F}))\Rightarrow {\mathcal H}^*_{W_1\cap Z_1}({\mathcal F})
\]
\item (iii)  Let $Z_2\subset Z_1$ be two close subschemes of $X$. Let $Y$ be scheme, and $p:X\times Y\to X$ be the first projection. There is an isomorphism
\[
p^*{\mathcal H}^i_{Z_1/Z_2}({\mathcal F})\simeq {\mathcal H}^i_{Z_1\times Y/Z_2\times Y}(p^*{\mathcal F})
\]
\end{lemma}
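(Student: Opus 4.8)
The plan is to regard (i)--(iii) as instances of the standard (relative) local cohomology formalism, for which the references are \cite{SGA2} and \cite{Ke1}; after one excision step, each reduces either to the degeneration of a spectral sequence or to flat base change.

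For (i) I would pass to the open subscheme $U:=X\setminus Z_2$, with $j\colon U\hookrightarrow X$ the inclusion. By excision (using that $R\underline{\Gamma}_{Z_2}$ vanishes on $U$) one has $H^i_{Z_1/Z_2}(X,{\mathcal F})\simeq H^i_{Z_1\setminus Z_2}(U,{\mathcal F}|_U)$ and, on the sheaf level, ${\mathcal H}^i_{Z_1/Z_2}({\mathcal F})$ is obtained from ${\mathcal H}^i_{Z_1\setminus Z_2}({\mathcal F}|_U)$ by pushing forward along $j$. Then I would run the hypercohomology spectral sequence
\[
E_2^{p,q}=H^p\bigl(U,{\mathcal H}^q_{Z_1\setminus Z_2}({\mathcal F}|_U)\bigr)\Rightarrow H^{p+q}_{Z_1\setminus Z_2}(U,{\mathcal F}|_U)
\]
and show it degenerates onto the $q$-axis: the sheaf ${\mathcal H}^q_{Z_1\setminus Z_2}({\mathcal F}|_U)$ is supported on the closed subscheme $Z_1\setminus Z_2$ of $U$, which is affine by hypothesis (and $X$ is affine), so it is the direct image of a quasi-coherent sheaf along an affine morphism and hence has vanishing higher cohomology. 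This yields $H^i_{Z_1/Z_2}(X,{\mathcal F})\simeq H^0\bigl(X,{\mathcal H}^i_{Z_1/Z_2}({\mathcal F})\bigr)$.

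For (ii) the key point is the identity of functors $\underline{\Gamma}_{S_1/S_2}=\underline{\Gamma}_{W_1/W_2}\circ\underline{\Gamma}_{Z_1/Z_2}$. I would verify it first for $Z_2=W_2=\emptyset$, where it amounts to the elementary observation that a section is supported on $W_1\cap Z_1$ exactly when it is supported on $Z_1$ and the corresponding section of $\underline{\Gamma}_{Z_1}({\mathcal F})$ is supported on $W_1$; the general relative case then follows by restricting to $X\setminus Z_2$ and $X\setminus W_2$ as in (i). Combined with the standard fact that $\underline{\Gamma}_{Z_1/Z_2}$ carries injective ${\mathcal O}_X$-modules to $\underline{\Gamma}_{W_1/W_2}$-acyclic ones, Grothendieck's spectral sequence of a composite of functors gives $E_2^{p,q}={\mathcal H}^p_{W_1/W_2}({\mathcal H}^q_{Z_1/Z_2}({\mathcal F}))\Rightarrow {\mathcal H}^{p+q}_{S_1/S_2}({\mathcal F})$; putting $Z_2=W_2=\emptyset$ (so $S_1=W_1\cap Z_1$, $S_2=\emptyset$) recovers the classical excision spectral sequence of \cite{SGA2}. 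For (iii) I would use flat base change: since $p$ is flat, $p^*$ is exact and commutes with the mapping cone defining $R\underline{\Gamma}_{Z_1}({\mathcal F})$, and flat base change for $Rk_*$ along $p$, with $k\colon X\setminus Z_1\hookrightarrow X$ (legitimate since all schemes are Noetherian), gives $p^*R\underline{\Gamma}_{Z_1}({\mathcal F})\simeq R\underline{\Gamma}_{Z_1\times Y}(p^*{\mathcal F})$; taking cohomology sheaves proves the case $Z_2=\emptyset$, and the relative statement follows by applying this to both $Z_1$ and $Z_2$ and comparing the two triangles.

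I expect the main obstacle to be organizational rather than substantive: one must pin down the definition of the relative functors ${\mathcal H}^\bullet_{Z_1/Z_2}$, check that restriction to $X\setminus Z_2$ turns them into ordinary local cohomology, and make sure this identification is compatible with all the maps entering (ii) and (iii). Granting that dictionary, (i) is the affine acyclicity above, (ii) is the composite-functor spectral sequence, and (iii) is flat base change, all of which are documented in \cite{SGA2} and \cite{Ke1}.
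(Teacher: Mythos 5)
The paper does not actually prove this lemma: all three parts are quoted from Kempf's paper on Grothendieck--Cousin complexes, namely \cite[Theorem 9.5(d)]{Ke1} for (i), \cite[Lemma 8.5(d)]{Ke1} for (ii), and \cite[Proposition 11.5]{Ke1} for (iii). Your proposal reconstructs essentially the arguments behind those citations: excision to $U=X\setminus Z_2$ plus degeneration of the local-to-global spectral sequence for (i), the Grothendieck composite-functor spectral sequence for $\underline{\Gamma}_{W_1/W_2}\circ\underline{\Gamma}_{Z_1/Z_2}=\underline{\Gamma}_{S_1/S_2}$ for (ii) (using that $\underline{\Gamma}_{Z_1/Z_2}$ of an injective is flasque, hence acyclic for the outer functor), and flat base change along the projection for (iii). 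This is the right route, and it matches the standard treatment in \cite{SGA2} and \cite{Ke1}.

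Two points in (i) deserve more care than your sketch gives them. First, the acyclicity claim: ${\mathcal H}^q_{Z_1\setminus Z_2}({\mathcal F}|_U)$ is supported on the affine closed subscheme $Z_1\setminus Z_2$ of $U$, but it is not literally the direct image of a quasi-coherent sheaf on that subscheme; it is only a filtered colimit of sheaves annihilated by powers of the ideal of $Z_1\setminus Z_2$, i.e.\ of pushforwards from the infinitesimal neighborhoods. One then needs that each such neighborhood is affine (a Noetherian scheme whose reduction is affine is affine) and that cohomology on a Noetherian scheme commutes with filtered colimits to conclude $H^p(U,{\mathcal H}^q_{Z_1\setminus Z_2}({\mathcal F}|_U))=0$ for $p>0$. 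Second, the identification ${\mathcal H}^i_{Z_1/Z_2}({\mathcal F})\simeq j_*{\mathcal H}^i_{Z_1\setminus Z_2}({\mathcal F}|_U)$ is not automatic if one defines the relative functor as the derived functor of the quotient $\underline{\Gamma}_{Z_1}/\underline{\Gamma}_{Z_2}$ (already for $i=0$ the natural map need not be surjective, since a section supported on $Z_1\setminus Z_2$ over $V\cap U$ need not extend to $V$); you do flag this ``dictionary'' issue yourself, and with Kempf's actual definitions it is handled in \cite[Section 7--9]{Ke1}, but it is the one place where your argument, as written, leans on an unproved identification rather than on a routine verification.
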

\begin{proof}
 For proof of (i) see \cite[Theorem 9.5 (d)]{Ke1}. For proof of (ii) see \cite[Lemma 8.5 (d)]{Ke1}. For proof of (iii) see \cite[Proposition 11.5]{Ke1}.
\end{proof}

Let $Y$ be an affine scheme. Let $Z_1\subset X$ and $Z_2\subset Y$ be two locally complete intersections, of respective codimensions $l_1$ and $l_2$. 
Let $L_1$ and $L_2$ be two invertible sheaves respectively on $X$ and $Y$. Let $p_1:X\times Y \to X$ and $p_2:X\times Y\to Y$ be the projections, and 
let ${\mathcal L}_i:=p_i^*L_i$. We are now ready to state our result.
\begin{theorem}\label{kunneth-local}
Assume $X$ and $Y$ are irreducible and Cohen-Macaulay. Then
\[
H^{l_1+l_2}_{Z_1\times Z_2}(X\times Y, {\mathcal L}_1\otimes{\mathcal L}_2)\simeq H^{l_1}_{Z_1}(X,L_1)\otimes_k H^{l_2}_{Z_2}(Y,L_2)
\]
\end{theorem}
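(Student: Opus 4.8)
The plan is to reduce the statement to the affine, module-theoretic setting and then compute $H^{l_1+l_2}_{Z_1\times Z_2}(X\times Y,{\mathcal L}_1\otimes{\mathcal L}_2)$ by first taking local cohomology along $Z_1\times Y$ and then along $X\times Z_2$, using the spectral sequence of Lemma (ii) together with the vanishing below the codimension. Write $X=\mathrm{Spec}(A)$, $Y=\mathrm{Spec}(B)$, so $X\times Y=\mathrm{Spec}(A\otimes_k B)$. Since $X$ and $Y$ are irreducible and Cohen--Macaulay, so is $X\times Y$ (the tensor product of Cohen--Macaulay $k$-algebras over a field is Cohen--Macaulay, and irreducibility is preserved because $k$ is algebraically closed). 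Moreover $Z_1\times Y$ and $X\times Z_2$ are locally complete intersections in $X\times Y$ of codimensions $l_1$ and $l_2$ respectively, and their intersection is $Z_1\times Z_2$, of codimension $l_1+l_2$. So all the hypotheses needed to invoke the local-cohomology tools of Section 3 are in place for the product as well.

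First I would apply Lemma (ii) (with $Z_2=W_2=\emptyset$) to $S_1=(Z_1\times Y)\cap(X\times Z_2)=Z_1\times Z_2$: there is a spectral sequence
\[
E_2^{p,q}={\mathcal H}^p_{X\times Z_2}\bigl({\mathcal H}^q_{Z_1\times Y}({\mathcal L}_1\otimes{\mathcal L}_2)\bigr)\Rightarrow {\mathcal H}^{p+q}_{Z_1\times Z_2}({\mathcal L}_1\otimes{\mathcal L}_2).
\]
Because $Z_1\times Y$ has codimension $l_1$ and is a local complete intersection in the Cohen--Macaulay scheme $X\times Y$, the sheaves ${\mathcal H}^q_{Z_1\times Y}(\text{flat})$ vanish for $q\neq l_1$ (local cohomology of a flat module along an l.c.i. is concentrated in the codimension; this is the vanishing cited from \cite[Lemma 3.12, III]{SGA2} / \cite[Lemma 3.12]{SGA2}). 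Since ${\mathcal L}_1\otimes{\mathcal L}_2$ is invertible, hence flat, the spectral sequence degenerates and gives
\[
{\mathcal H}^{l_1+l_2}_{Z_1\times Z_2}({\mathcal L}_1\otimes{\mathcal L}_2)\simeq {\mathcal H}^{l_2}_{X\times Z_2}\bigl({\mathcal H}^{l_1}_{Z_1\times Y}({\mathcal L}_1\otimes{\mathcal L}_2)\bigr).
\]
Now I would identify ${\mathcal H}^{l_1}_{Z_1\times Y}({\mathcal L}_1\otimes{\mathcal L}_2)$. Using Lemma (iii) (base change along $p_1:X\times Y\to X$) we have ${\mathcal H}^{l_1}_{Z_1\times Y}(p_1^*L_1)\simeq p_1^*{\mathcal H}^{l_1}_{Z_1}(L_1)$; then twisting by the flat sheaf ${\mathcal L}_2=p_2^*L_2$ and applying Lemma \ref{isomorphism-flat-modules}(i) (the map $t^{l_1}_{{\mathcal L}_2,{\mathcal L}_1}$ is an isomorphism since ${\mathcal L}_2$ is flat) gives ${\mathcal H}^{l_1}_{Z_1\times Y}({\mathcal L}_1\otimes{\mathcal L}_2)\simeq {\mathcal H}^{l_1}_{Z_1}(L_1)\boxtimes L_2$, i.e. $p_1^*{\mathcal H}^{l_1}_{Z_1}(L_1)\otimes p_2^*L_2$. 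Substituting, applying Lemma (iii) again — now along $p_2$, for the closed subscheme $Z_2\subset Y$ — and Lemma \ref{isomorphism-flat-modules}(i) once more (now with ${\mathcal H}^{l_1}_{Z_1}(L_1)$ pulled back along $p_1$ as the flat factor — here I must check $p_1^*{\mathcal H}^{l_1}_{Z_1}(L_1)$ is flat over $Y$, which holds since it is pulled back from $X$), we obtain
\[
{\mathcal H}^{l_1+l_2}_{Z_1\times Z_2}({\mathcal L}_1\otimes{\mathcal L}_2)\simeq p_1^*{\mathcal H}^{l_1}_{Z_1}(L_1)\otimes p_2^*{\mathcal H}^{l_2}_{Z_2}(L_2).
\]
Finally I take global sections. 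Since everything is affine, $H^{l_1+l_2}_{Z_1\times Z_2}(X\times Y,\bullet)=H^0(X\times Y,{\mathcal H}^{l_1+l_2}_{Z_1\times Z_2}(\bullet))$ by Lemma (i) (with $Z_2=\emptyset$, so $Z_1\setminus Z_2=X\times Y$ is affine), and likewise for the factors; and $H^0(\mathrm{Spec}(A\otimes_k B), p_1^*{\mathcal M}\otimes p_2^*{\mathcal N}) = M\otimes_k N$ for modules $M$ over $A$, $N$ over $B$. This yields the claimed isomorphism $H^{l_1+l_2}_{Z_1\times Z_2}(X\times Y,{\mathcal L}_1\otimes{\mathcal L}_2)\simeq H^{l_1}_{Z_1}(X,L_1)\otimes_k H^{l_2}_{Z_2}(Y,L_2)$.

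The main obstacle I anticipate is bookkeeping at two points rather than any deep difficulty. First, one must be careful that all the flatness hypotheses in Lemma \ref{isomorphism-flat-modules}(i) are genuinely satisfied at each application: the twisting sheaf being flat is clear for invertible sheaves, but when I twist by $p_1^*{\mathcal H}^{l_1}_{Z_1}(L_1)$ in the second step I am using that a module pulled back from $X$ is flat as an $\mathcal O_{X\times Y}$-module relative to the $Y$-structure, so that part (i) of the lemma applies with the roles of the arguments chosen correctly — essentially the "one factor at a time" philosophy behind ordinary Künneth. Second, one should confirm that $X\times Y$ is Cohen--Macaulay and that $Z_1\times Y$, $X\times Z_2$ remain local complete intersections of the expected codimensions in it, so that the SGA2 vanishing \cite[Lemma 3.12, III]{SGA2} applies to kill ${\mathcal H}^{>l_1}_{Z_1\times Y}$ and ${\mathcal H}^{>l_2}_{X\times Z_2}$; these are standard facts about products over a field but deserve an explicit line. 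Everything else is a direct chain of the already-established isomorphisms.
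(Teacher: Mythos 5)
Your overall strategy is the same as the paper's: run the composite-functor spectral sequence for ${\mathcal H}^*_{Z_1\times Z_2}$ as an iteration of local cohomology along $Z_1\times Y$ and $X\times Z_2$, kill all but one term using the codimension bounds (l.c.i.\ gives vanishing above the codimension, Cohen--Macaulayness gives vanishing below), identify the surviving $E_2$-term as an external tensor product via Lemma \ref{isomorphism-flat-modules}, and take global sections. The paper does the two subschemes in the opposite order and skips the detour through the flat base change isomorphism ${\mathcal H}^{l_1}_{Z_1\times Y}(p_1^*L_1)\simeq p_1^*{\mathcal H}^{l_1}_{Z_1}(L_1)$, but these are cosmetic differences.

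There is, however, one step whose justification is wrong as written. In your second application of Lemma \ref{isomorphism-flat-modules} you want to pull $p_1^*{\mathcal H}^{l_1}_{Z_1}(L_1)$ out of ${\mathcal H}^{l_2}_{X\times Z_2}(-)$, and you invoke part (i) of that lemma, which requires the factor being extracted to be a flat ${\mathcal O}_{X\times Y}$-module. You check only flatness \emph{over $Y$} (equivalently, over $B$), which is indeed automatic for a sheaf pulled back from $X$, but that is not the hypothesis of part (i). Flatness over ${\mathcal O}_{X\times Y}$ fails badly here: ${\mathcal H}^{l_1}_{Z_1}(L_1)$ is supported on $Z_1$, so its pullback is supported on the proper closed subscheme $Z_1\times Y$ of the integral scheme $X\times Y$, hence is a torsion module and cannot be flat unless it is zero. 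This is exactly the situation part (ii) of Lemma \ref{isomorphism-flat-modules} is designed for: take ${\mathcal G}=p_2^*L_2$ (flat), ${\mathcal F}=p_1^*{\mathcal H}^{l_1}_{Z_1}(L_1)$ (arbitrary), $Z=X\times Z_2$ a locally complete intersection of codimension $l_2$ in the Cohen--Macaulay scheme $X\times Y$; then $t^{l_2}_{{\mathcal F},{\mathcal G}}$ is an isomorphism in the single degree $l_2$, which is the only degree you need. With part (ii) substituted for part (i) at that step, your argument goes through and coincides in substance with the paper's proof; note that this substitution is also why the Cohen--Macaulay and l.c.i.\ hypotheses in the statement are genuinely needed and not just bookkeeping.
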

\begin{proof}
First let us notice that
\[
H^{n}_{Z_1\times Z_2}(X\times Y,{\mathcal L}_1\otimes {\mathcal L}_2)=H^0(X\times Y,{\mathcal H}^n_{Z_1\times Z_2}({\mathcal L}_1\otimes{\mathcal L}_2))
\]
Since ${\mathcal L}_1$ and ${\mathcal L}_2$ are locally free, and by the conditions on $Z_i$'s, the spectral sequence
\[
E^{p,q}_2 = {\mathcal H}^p_{Z_1\times Y}({\mathcal H}^q_{X\times Z_2}({\mathcal L}_1\otimes {\mathcal L}_2))\Rightarrow {\mathcal H}^*_{Z_1\times Z_2}({\mathcal L}_1\otimes {\mathcal L}_2)
\]
collapses, and it may only be nonzero for $p=l_1$ and $q=l_2$. Here we are using that ${\mathcal L}_1\otimes {\mathcal L}_2$ is also locally free, and that 
${\mathcal I}_{Z_1\times Z_2}$ is locally generated by $l_1+l_2$ elements.\\
\indent
Using Lemma \ref{isomorphism-flat-modules}, we compute
\[
\begin{array}{rclr}
{\mathcal H}^p_{Z_1\times Y}({\mathcal H}^q_{X\times Z_2}({\mathcal L}_1\otimes {\mathcal L}_2)) & \simeq & {\mathcal H}^p_{Z_1\times Y}({\mathcal L}_1\otimes {\mathcal H}^q_{X\times Z_2}({\mathcal L}_2)) & (\textrm{by Lemma \ref{isomorphism-flat-modules} (i))}\\
& \simeq & {\mathcal H}^p_{Z_1\times Y}({\mathcal L}_1)\otimes {\mathcal H}^q_{X\times Z_2}({\mathcal L}_2) & (\textrm{by Lemma \ref{isomorphism-flat-modules}  (ii))}
\end{array}
\]
Taking global sections yield the wanted isomorphism.
\end{proof}

\section{Locally trivial fibrations and Cousin complexes}
Let $S$ and $F$ be smooth complete connected schemes, and let $E\to S$ be a (Zariski) locally trivial
fibration, with fiber $F$. Moreover, assume that the Picard group Pic$(F)$ of $F$ is a projective $\mathbb{Z}$-module, and that $F$ is rational. We start with the following useful 
description of Picard group Pic$(E)$ of $E$.
\begin{proposition}\label{Pic}
We have an (noncanonical) isomorphism $\textnormal{Pic}(E)\simeq \textnormal{Pic}(S)\oplus \textnormal{Pic}(F)$
\end{proposition}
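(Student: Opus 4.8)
The plan is to build the isomorphism from two maps and show they are mutually inverse. First I would produce a map $\mathrm{Pic}(S)\oplus\mathrm{Pic}(F)\to\mathrm{Pic}(E)$. The pullback along $p\colon E\to S$ gives $p^*\colon\mathrm{Pic}(S)\to\mathrm{Pic}(E)$, so the only real work is constructing a splitting $\mathrm{Pic}(F)\to\mathrm{Pic}(E)$ of the restriction map $\mathrm{Pic}(E)\to\mathrm{Pic}(F)$ to a fiber $F=p^{-1}(s_0)$. Here I would use that $\mathrm{Pic}(F)$ is a projective (hence free, since finitely generated over $\mathbb{Z}$) $\mathbb{Z}$-module: once we know the restriction map $\mathrm{Pic}(E)\to\mathrm{Pic}(F)$ is surjective, the short exact sequence $0\to K\to\mathrm{Pic}(E)\to\mathrm{Pic}(F)\to 0$ (where $K$ is the kernel) splits automatically, giving the desired section. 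So the two substantive points are: (a) the restriction map to a fiber is surjective, and (b) its kernel $K$ is exactly $p^*\mathrm{Pic}(S)$, and $p^*$ is injective.

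For surjectivity in (a): cover $S$ by opens $U_i$ over which $E$ is trivial, $p^{-1}(U_i)\simeq U_i\times F$. Given a line bundle $L$ on $F$, the bundle $\mathrm{pr}_2^*L$ on $U_i\times F$ restricts to $L$ on each fiber; the obstruction to gluing these into a global line bundle on $E$ lies in $H^1$ of the transition data, and since $F$ is rational and complete, $H^0(F,\mathcal{O}_F^*)=k^*$ forces the transition functions on $(U_i\cap U_j)\times F$ to come from $\mathcal{O}^*(U_i\cap U_j)$, i.e. from a cocycle on $S$; adjusting by such a cocycle (a line bundle pulled back from $S$) one glues. Concretely, I would phrase this via the Leray spectral sequence for $p$ and the sheaf $\mathcal{O}_E^*$: rationality of $F$ gives $R^0p_*\mathcal{O}_E^* = \mathcal{O}_S^*$ and the relevant low-degree exact sequence reads $0\to\mathrm{Pic}(S)\to\mathrm{Pic}(E)\to H^0(S,R^1p_*\mathcal{O}_E^*)$, with the last map's target receiving $\mathrm{Pic}(F)$; local triviality identifies $R^1p_*\mathcal{O}_E^*$ with the constant sheaf $\mathrm{Pic}(F)$ (this uses that automorphisms of $\mathrm{pr}_2^*L$ on $U\times F$ are just $\mathcal{O}^*(U)$, again by rationality/completeness of $F$), so $H^0(S,R^1p_*\mathcal{O}_E^*)=\mathrm{Pic}(F)$ and the sequence $0\to\mathrm{Pic}(S)\to\mathrm{Pic}(E)\to\mathrm{Pic}(F)\to 0$ is exact. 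That simultaneously yields (a), (b), and injectivity of $p^*$.

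I expect the main obstacle to be the clean identification $R^1p_*\mathcal{O}_E^*\simeq \underline{\mathrm{Pic}(F)}$ and the exactness on the right: one must rule out monodromy (the local systems $R^1p_*\mathcal{O}_E^*$ could in principle be a nonconstant sheaf of groups) and check that a section over all of $S$ actually lifts to an honest line bundle on $E$ rather than just a gerbe-theoretic class; this is where rationality of $F$ (killing $H^1(F,\mathcal{O}_F)$ and forcing $\mathrm{Pic}$ to be discrete with no infinitesimal part) and completeness (so $H^0(\mathcal{O}_F)=k$) do the work, and where I would be most careful. Once the short exact sequence $0\to\mathrm{Pic}(S)\to\mathrm{Pic}(E)\to\mathrm{Pic}(F)\to 0$ is established, projectivity of $\mathrm{Pic}(F)$ splits it and gives the (noncanonical) isomorphism $\mathrm{Pic}(E)\simeq\mathrm{Pic}(S)\oplus\mathrm{Pic}(F)$, completing the proof.
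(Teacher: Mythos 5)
Your plan follows the paper's proof almost exactly: the paper obtains the exact sequence $H^0(F,\mathcal{O}_F^*)/k^* \to \mathrm{Pic}(S)\to\mathrm{Pic}(E)\to\mathrm{Pic}(F)\to 0$ by citing Fossum--Iversen (Proposition 2.3 of \cite{FI}), notes that $H^0(F,\mathcal{O}_F^*)=k^*$ from the hypotheses on $F$, and then splits the resulting short exact sequence using projectivity of $\mathrm{Pic}(F)$, exactly as you do. The only difference is that you sketch a Leray/\v{C}ech proof of the cited exact sequence rather than quoting it, and the two worries you flag there do close (a Zariski-locally constant sheaf on the irreducible base $S$ is automatically constant, and the gerbe obstruction lives in $H^2_{\mathrm{Zar}}(S,\mathcal{O}_S^*)$, which vanishes for $S$ smooth and irreducible), so the plan is correct.
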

\begin{proof}
Using \cite[Proposition 2.3]{FI}, we have an exact sequence
\[
H^0(F,{\mathcal O}_F^*)/k^* \to \textrm{Pic}(S) \to \textrm{Pic}(E) \to \textrm{Pic}(F) \to 0
\]
By the hypotheses on $F$, $H^0(F,{\mathcal O}_F^*)=k^*$. Since $\textrm{Pic}(F)$ is projective, 
we get $\textnormal{Pic}(E)\simeq \textnormal{Pic}(S)\oplus \textnormal{Pic}(F)$. This completes the proof of the proposition. 
\end{proof}

Let $\mathcal L$ be a line bundle on $E$, then by Lemma \ref{Pic}, we can identify $\mathcal L$ with $\mathcal L_1\otimes \mathcal L_2$ for some line bundles ${\mathcal L}_1$ and ${\mathcal L}_2$ on $S$ and $F$ respectively. 
The aim of this section is to compute the cohomology groups $H^n(E,\mathcal L)$. 
To do that, we shall need the formalism of Cousin complexes.\\
\indent
Let $X$ be a Noetherian scheme, let ${\mathcal F}$ be a ${\mathcal O}_X$-module, and let $\{Z\}:=\emptyset=Z_{n+1}\subset Z_n\subset \ldots \subset Z_1\subset Z_0=X$ be a filtration by 
closed subschemes. For simplicity, let us assume that $X$ is irreducible. Then one can construct (see \cite[Lemma 7.8]{Ke1}) the Cousin complex of ${\mathcal F}$ relatively to the filtration $\{Z\}$
\[
\textrm{Cousin}_{\{Z\}}({\mathcal F}) := 0\to H^0(X,{\mathcal F})\to H^0_{Z_0/Z_1}(X,{\mathcal F})\to H^1_{Z_1/Z_2}(X,{\mathcal F}) \to H^2_{Z_2/Z_3}(X,{\mathcal F}) \to \ldots
\]
and its sheaf analogue, denoted by $\underline{{\mathcal C}\textrm{ousin}}_{\{Z\}}({\mathcal F})$. Kempf showed in \cite[Theorem 9.6, 10.3 and 10.5]{Ke1} that under some conditions, these Cousin 
complexes can be used to compute the cohomology groups $H^i(X,{\mathcal F})$:
\begin{proposition}\label{Cousin-Kempf}
Assume that the $Z_i\setminus Z_{i+1}$ are all affine, that $Z_i$ is of codimension $i$ for all $i\geq 0$, and that ${\mathcal F}$ is locally free. Then we have an isomorphism of complexes
\[
\textnormal{Cousin}_{\{Z\}}({\mathcal F}) \simeq H^0(X, \underline{{\mathcal C}\textnormal{ousin}}_{\{Z\}}({\mathcal F}))
\]
and $H^i(X,{\mathcal F})$ is the $i$-th homology group of $\textnormal{Cousin}_{\{Z\}}({\mathcal F})$.
\end{proposition}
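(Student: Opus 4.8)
This proposition is Kempf's analysis of Cousin complexes \cite[Theorems 9.6, 10.3 and 10.5]{Ke1} in the form we need, so the plan is to verify that the three hypotheses place us exactly in his setting and then to assemble his results. I would organize the argument around two properties of the sheaf complex $\underline{{\mathcal C}\textnormal{ousin}}_{\{Z\}}({\mathcal F})$ together with its natural augmentation: first, that $0\to{\mathcal F}\to\underline{{\mathcal C}\textnormal{ousin}}_{\{Z\}}({\mathcal F})$ is a resolution; second, that each term ${\mathcal H}^i_{Z_i/Z_{i+1}}({\mathcal F})$ is acyclic for the global sections functor $\Gamma(X,-)$. Granting both, $\underline{{\mathcal C}\textnormal{ousin}}_{\{Z\}}({\mathcal F})$ is a $\Gamma$-acyclic resolution of ${\mathcal F}$, hence $R\Gamma(X,{\mathcal F})$ is represented by the complex $\Gamma(X,\underline{{\mathcal C}\textnormal{ousin}}_{\{Z\}}({\mathcal F}))$; identifying the latter with $\textnormal{Cousin}_{\{Z\}}({\mathcal F})$ then yields both assertions of the proposition.

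For the resolution property, the key input is the vanishing ${\mathcal H}^j_{Z_i/Z_{i+1}}({\mathcal F})=0$ for all $j\neq i$. Since ${\mathcal F}$ is locally free, this reduces to the case ${\mathcal F}={\mathcal O}_X$; then, because $Z_i$ has codimension $i$ and $Z_{i+1}$ codimension $i+1$, so that $Z_i\setminus Z_{i+1}$ is purely of codimension $i$, Grothendieck's vanishing theorem gives ${\mathcal H}^j_{Z_i/Z_{i+1}}({\mathcal O}_X)=0$ for $j<i$, while the vanishing for $j>i$ follows from the Cohen--Macaulayness of the local rings of $X$ (these are regular in all the applications of this paper). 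With this concentration in a single degree in hand, Kempf's criterion \cite[Theorem 9.6]{Ke1} --- which at the level of sheaves amounts to the collapse of the iterated local cohomology spectral sequences recalled above --- gives the exactness of the augmented complex.

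For the $\Gamma$-acyclicity of the terms I would use the affineness hypothesis: near any point, ${\mathcal H}^i_{Z_i/Z_{i+1}}({\mathcal F})$ is the direct image of a quasi-coherent sheaf along an affine morphism --- this is where $Z_i\setminus Z_{i+1}$ being affine enters --- so $H^p(X,{\mathcal H}^i_{Z_i/Z_{i+1}}({\mathcal F}))=0$ for $p>0$. The same affineness, through \cite[Theorem 9.5 (d)]{Ke1} (part (i) of the lemma on local cohomology recalled above), gives the term-by-term identification $\Gamma(X,{\mathcal H}^i_{Z_i/Z_{i+1}}({\mathcal F}))=H^i_{Z_i/Z_{i+1}}(X,{\mathcal F})$, and the differentials of the two complexes $\Gamma(X,\underline{{\mathcal C}\textnormal{ousin}}_{\{Z\}}({\mathcal F}))$ and $\textnormal{Cousin}_{\{Z\}}({\mathcal F})$ agree because both are induced by the connecting homomorphisms of the local cohomology long exact sequences attached to the triples $Z_{i+1}\subset Z_i\subset Z_{i-1}$, which are natural and so commute with taking sections. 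This establishes the displayed isomorphism of complexes, and combined with the previous paragraph identifies $H^i(X,{\mathcal F})$ with the $i$-th homology of $\textnormal{Cousin}_{\{Z\}}({\mathcal F})$.

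The step I expect to be the genuine obstacle --- the only one where the hypotheses do real work rather than bookkeeping --- is the single-degree concentration ${\mathcal H}^j_{Z_i/Z_{i+1}}({\mathcal F})=0$ for $j\neq i$. One must read ``$Z_i$ of codimension $i$'' in the strong sense that every irreducible component of $Z_i$ has codimension exactly $i$, so that $Z_i\setminus Z_{i+1}$ is pure of codimension $i$, and one needs the local rings of $X$ to be Cohen--Macaulay so that the relevant local cohomology modules cannot survive outside the top degree. Both are automatic when $X$ is smooth, which is the case for the fibration $E$ to which the proposition will be applied in the next section; for a bare Noetherian $X$ one should add local Cohen--Macaulayness to the hypotheses.
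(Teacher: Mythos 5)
Your overall architecture --- exhibit the sheaf-level Cousin complex as a $\Gamma$-acyclic resolution of ${\mathcal F}$, identify its global sections with $\textnormal{Cousin}_{\{Z\}}({\mathcal F})$ via the affineness of the strata $Z_i\setminus Z_{i+1}$, and conclude by computing $R\Gamma(X,{\mathcal F})$ --- is exactly Kempf's, and the paper offers no independent proof: it simply cites \cite[Theorems 9.6, 10.3 and 10.5]{Ke1}. So the route is the right one. However, your justification of the step you yourself single out as the crux, the concentration ${\mathcal H}^j_{Z_i/Z_{i+1}}({\mathcal F})=0$ for $j\neq i$, swaps the roles of the two hypotheses and omits the one that is genuinely needed for half of it. The vanishing in low degrees $j<i$ is the depth-sensitivity of local cohomology: it requires $\mathrm{depth}_{Z_i}({\mathcal O}_X)\geq i$, which is where Cohen--Macaulayness (combined with $\mathrm{codim}\,Z_i\geq i$) enters; purity of the codimension alone, via ``Grothendieck vanishing,'' does not give it. Conversely, the vanishing in high degrees $j>i$ does \emph{not} follow from Cohen--Macaulayness of $X$: local cohomological dimension can exceed codimension even in a regular ambient scheme. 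The affine cone $Z$ over the Segre embedding of $\mathbb{P}^1\times\mathbb{P}^2$ in $X=\mathbb{A}^6$ is affine, irreducible, of pure codimension $2$ in a smooth (hence Cohen--Macaulay) scheme, yet $H^3_Z({\mathcal O}_X)\neq 0$ because $Z$ is not a set-theoretic complete intersection. What the upper vanishing actually needs is that the ideal of $Z_i\setminus Z_{i+1}$ in $X\setminus Z_{i+1}$ be locally generated by $i$ elements --- e.g.\ that the strata be local complete intersections --- so that the \v{C}ech--Koszul computation of ${\mathcal H}^*_{Z_i/Z_{i+1}}$ stops in degree $i$.

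This is a defect of the bookkeeping rather than of your strategy. In the only place the proposition is used, Theorem \ref{cohomology-fibration}, hypothesis (i) explicitly makes each stratum a locally complete intersection inside an affine open, and the ambient scheme there is smooth, so both vanishings hold; note also that the paper itself invokes precisely the lci condition for the analogous upper-vanishing step in the proof of Lemma \ref{isomorphism-flat-modules}(ii), via \cite[Lemma 3.12, III]{SGA2}. Your proof becomes correct once you (a) attribute $j<i$ to depth (hence to Cohen--Macaulayness plus codimension) and $j>i$ to the strata being locally cut out by $i$ equations, and (b) record \emph{both} of these as hypotheses missing from the bare statement of the proposition, rather than adding only Cohen--Macaulayness.
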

We shall now state our main result.
\begin{theorem}\label{cohomology-fibration}
Assume that $S$ is stratified by locally closed affine irreducible schemes $\{Z^1_i\}_{i\in I}$, and that $F$ is stratified by locally closed affine irreducible schemes $\{Z^2_j\}_{j\in J}$ such that:
\item (i) there are affine open subsets $U^1_i\subset S$ and $U^2_j\subset F$ containing respectively $Z^1_i$ and $Z^2_j$, in which they are locally complete intersections.
\item (ii) $E$ is stratified by the $\{Z^1_i\times Z^2_j\}$, and there are open embeddings $U^1_i\times U^2_j\to E$.\\
Then we can compute $H^n(E,{\mathcal L}_1\otimes {\mathcal L}_2)$ with the following converging spectral sequence
\[
E^{p,q}_2=H^p(S,{\mathcal L}_1)\otimes H^q(F,{\mathcal L}_2) \Rightarrow H^*(E,{\mathcal L}_1\otimes {\mathcal L}_2)
\]
If further, either ${\mathcal L}_1$ or ${\mathcal L}_2$ has its cohomology concentrated in a single degree, this spectral sequence collapses, and we get
\[
H^n(E,{\mathcal L}_1\otimes {\mathcal L}_2) = \bigoplus\limits_{p+q=n}H^p(S,{\mathcal L}_1)\otimes H^q(F,{\mathcal L}_2).
\]
\end{theorem}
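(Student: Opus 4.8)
The plan is to build, on both base $S$ and fiber $F$, the Cousin complexes attached to the given affine stratifications, and then to recognize the Cousin complex of $E$ relative to the product stratification as (up to homology-computing equivalence) the total complex of the tensor product of the two. Concretely, I would first apply Proposition~\ref{Cousin-Kempf} three times: to $S$ with the filtration coming from $\{Z^1_i\}$, to $F$ with the filtration coming from $\{Z^2_j\}$, and to $E$ with the filtration coming from $\{Z^1_i\times Z^2_j\}$; hypotheses (i) and (ii) are exactly what is needed to check that each successive stratum difference is affine and that the codimensions behave additively, so that $H^n(E,{\mathcal L}_1\otimes{\mathcal L}_2)$ is computed by $\mathrm{Cousin}_{\{Z\}}({\mathcal L}_1\otimes{\mathcal L}_2)$. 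The point is then to identify each graded piece $H^{l}_{Z^1_i\times Z^2_j/\ldots}(E,{\mathcal L}_1\otimes{\mathcal L}_2)$, where $l=\mathrm{codim}\,Z^1_i+\mathrm{codim}\,Z^2_j$, with $H^{l_1}_{Z^1_i}(U^1_i,L_1)\otimes_k H^{l_2}_{Z^2_j}(U^2_j,L_2)$.

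The identification of these graded pieces is where Theorem~\ref{kunneth-local} enters, and it is the technical heart of the argument. Working inside the open $U^1_i\times U^2_j\hookrightarrow E$, the local cohomology $H^{l}_{Z^1_i\times Z^2_j}$ is insensitive to the ambient scheme, so on that open chart the local-to-local Künneth isomorphism applies: $U^1_i$ and $U^2_j$ are affine, irreducible, and (being open in the smooth schemes $S$, $F$) Cohen--Macaulay, the $Z$'s are locally complete intersections there by (i), and the line bundle restricts to $p_1^*(L_1|_{U^1_i})\otimes p_2^*(L_2|_{U^2_j})$. Thus $H^{l}_{Z^1_i\times Z^2_j}(U^1_i\times U^2_j,{\mathcal L}_1\otimes{\mathcal L}_2)\simeq H^{l_1}_{Z^1_i}(U^1_i,L_1)\otimes_k H^{l_2}_{Z^2_j}(U^2_j,L_2)$, and passing to the quotient-type local cohomology groups $H^{\bullet}_{Z_m/Z_{m+1}}$ used in the Cousin complex is handled the same way after refining the filtration. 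The main obstacle I anticipate is bookkeeping: one must choose the filtration of $E$ by unions of product strata so that it simultaneously refines the pullback of the filtration of $S$ and the filtration of $F$, and then check that the differentials of $\mathrm{Cousin}_{\{Z\}}({\mathcal L}_1\otimes{\mathcal L}_2)$ match, under the above isomorphisms, the differentials of the tensor-product double complex $\mathrm{Cousin}_{\{Z^1\}}(L_1)\otimes_k\mathrm{Cousin}_{\{Z^2\}}(L_2)$. This compatibility of connecting maps is not formal; I would prove it by naturality of the maps $t^p_{{\mathcal F},{\mathcal G}}$ of Lemma~\ref{isomorphism-flat-modules} and of the base-change isomorphism (iii) of the preceding lemma, reducing each differential to a local statement on a single chart $U^1_i\times U^2_j$ where everything is explicit.

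Once $\mathrm{Cousin}_{\{Z\}}({\mathcal L}_1\otimes{\mathcal L}_2)$ is identified with $\mathrm{Tot}\big(C^\bullet\otimes_k D^\bullet\big)$, where $C^\bullet=\mathrm{Cousin}_{\{Z^1\}}(L_1)$ and $D^\bullet=\mathrm{Cousin}_{\{Z^2\}}(L_2)$ compute $H^\bullet(S,L_1)$ and $H^\bullet(F,L_2)$ respectively, the algebraic Künneth spectral sequence for a tensor product of complexes of $k$-vector spaces degenerates at $E_2$ (we are over a field, so there are no Tor terms), giving exactly
\[
E^{p,q}_2=H^p(S,{\mathcal L}_1)\otimes_k H^q(F,{\mathcal L}_2)\;\Rightarrow\;H^{p+q}(E,{\mathcal L}_1\otimes{\mathcal L}_2).
\]
For the final assertion, if say $H^\bullet(S,L_1)$ is concentrated in a single degree $p_0$, then $C^\bullet$ is quasi-isomorphic to the complex with $H^{p_0}(S,L_1)$ in degree $p_0$ and zero elsewhere, and tensoring over $k$ with $D^\bullet$ (a complex of vector spaces) yields a complex whose homology is $H^{p_0}(S,L_1)\otimes_k H^{q}(F,L_2)$ in degree $p_0+q$; the same argument works with the roles of $S$ and $F$ exchanged. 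Hence the spectral sequence has at most one nonzero row or column, so it collapses and the extension splits, giving $H^n(E,{\mathcal L}_1\otimes{\mathcal L}_2)=\bigoplus_{p+q=n}H^p(S,{\mathcal L}_1)\otimes_k H^q(F,{\mathcal L}_2)$ as claimed.
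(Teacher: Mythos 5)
Your proposal is correct and follows essentially the same route as the paper: rewrite the terms of the Cousin complex of $E$ via excision on the charts $U^1_i\times U^2_j$, apply Theorem \ref{kunneth-local} to identify each piece with a tensor product of local cohomology groups, recognize the Cousin complex of $E$ as the total complex of the (double) tensor-product complex of the Cousin complexes of $S$ and $F$, and conclude with the associated spectral sequence over the field $k$. The point you flag as the technical heart --- that the differentials of $\mathrm{Cousin}_{\{E_l\}}({\mathcal L}_1\otimes{\mathcal L}_2)$ match the horizontal and vertical differentials of the double complex --- is exactly the step the paper handles by analyzing which boundary maps vanish between product strata, so no essential difference.
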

\begin{proof}
Let us start by fixing some notations. Let 
\[
\overline{S_l}:=\bigcup\limits_{\textrm{codim}(Z^1_i)\geq l} Z^1_i \textrm{, } \overline{F_l}:=\bigcup\limits_{\textrm{codim}(Z^2_j)\geq l} Z^2_j\textrm {, and } \overline{E_l}:=\bigcup\limits_{\textrm{codim}(Z^1_i\times Z^2_j)\geq l} Z^1_i\times Z^2_j
\]
and let $S_l=\overline{S_l}\setminus\overline{S_{l+1}}$, $F_l=\overline{F_l}\setminus\overline{F_{l+1}}$, and $E_l=\overline{E_l}\setminus\overline{E_{l+1}}$.\\
By using twice the excision formula, we can rewrite the cohomology groups occurring in the Cousin complex of ${\mathcal L}_1\otimes {\mathcal L}_2$ in the following way:
\[
\begin{array}{rcll}
H^l_{\overline{E_l}/\overline{E_{l+1}}}(E,{\mathcal L}_1\otimes {\mathcal L}_2)&\simeq &H^l_{E_l}(E\setminus \overline{E_{l+1}},{\mathcal L}_1\otimes {\mathcal L}_2)&\\
&\simeq &\bigoplus\limits_{p+q=l}\bigoplus\limits_{\substack{\textrm{codim}(Z^1_i)=p \\ \textrm{codim}(Z^2_j)=q}} H^l_{Z^1_i\times Z^2_j}(E\setminus \overline{E_{l+1}},{\mathcal L}_1\otimes {\mathcal L}_2)&\\
&\simeq &\bigoplus\limits_{p+q=l}\bigoplus\limits_{\substack{\textrm{codim}(Z^1_i)=p \\ \textrm{codim}(Z^2_j)=q}} H^l_{Z^1_i\times Z^2_j}(U^1_i\times U^2_j,{\mathcal L}_1\otimes {\mathcal L}_2)&\\
&\simeq &\bigoplus\limits_{p+q=l}\bigoplus\limits_{\substack{\textrm{codim}(Z^1_i)=p \\ \textrm{codim}(Z^2_j)=q}} H^l_{Z^1_i\times Z^2_j/\partial (Z^1_i\times Z^2_j)}(E,{\mathcal L}_1\otimes {\mathcal L}_2)&(*)\\
\end{array}
\]
where $\partial (Z^1_i\times Z^2_j)$ denotes $\overline{Z^1_i\times Z^2_j}\setminus Z^1_i\times Z^2_j$. The boundary maps 
\[
\partial_E^l : H^l_{Z^1_i\times Z^2_j/\partial (Z^1_i\times Z^2_j)}(E,{\mathcal L}_1\otimes {\mathcal L}_2) \to H^{l+1}_{Z^1_{i'}\times Z^2_{j'}/\partial (Z^1_{i'}\times Z^2_{j'})}(E,{\mathcal L}_1\otimes {\mathcal L}_2)
\]
are zero whenever $Z^1_{i'}\times Z^2_{j'}\not\subset \partial (Z^1_i\times Z^2_j)$. But a stratum $Z^1_{i'}\times Z^2_{j'}$ of codimension $l+1$ lies $\partial (Z^1_i\times Z^2_j)$ exactly when it is of the form $Z^1_{i'}\times Z^2_j$ or $Z^1_i\times Z^2_{j'}$, with either $Z^1_{i'}\subset\partial Z^1_i$ of codimension codim$(Z^1_i)+1$ or $Z^2_{j'}\subset\partial Z^2_j$ of codimension codim$(Z^2_j)+1$.\\
\indent
We shall now introduce the following bicomplex:
\[
\begin{array}{rcll}
K^{p,q}&:=&\bigoplus\limits_{\substack{\textrm{codim}(Z^1_i)=p \\ \textrm{codim}(Z^2_j)=q}} H^{p+q}_{Z^1_i\times Z^2_j}(U^1_i\times U^2_j,{\mathcal L}_1\otimes {\mathcal L}_2)& \\
&\simeq& \bigoplus\limits_{\substack{\textrm{codim}(Z^1_i)=p \\ \textrm{codim}(Z^2_j)=q}} H^p_{Z^1_i}(U^1_i,{\mathcal L}_1)\otimes H^q_{Z^2_j}(U^2_j,{\mathcal L}_2)& \textrm{by Theorem \ref{kunneth-local}}.
\end{array}
\]
Notice that with the same kind of arguments than for $(*)$, $\bigoplus\limits_{\textrm{codim}(Z^1_i)=p} H^p_{Z^1_i}(U^1_i,{\mathcal L}_1)$ is the $p+1$-th term of 
the $\textrm{Cousin}_{\{S_l\}}({\mathcal L}_1)$, and denote by $\partial^p_{S,i}$ the induced map on $H^p_{Z^1_i}(U^1_i,{\mathcal L}_1)$. For the same reasons, 
$\bigoplus\limits_{\textrm{codim}(Z^2_j)=q} H^q_{Z^2_j}(U^2_j,{\mathcal L}_2)$ is the $(q+1)$-th term of the $\textrm{Cousin}_{\{F_l\}}({\mathcal L}_2)$, 
and denote by $\partial^q_{F,j}$ the induced map on $H^q_{Z^2_j}(U^2_j,{\mathcal L}_2)$. Define boundary maps on $K^{p,q}$ by :
\begin{center}
\begin{tikzcd}
	{} & \vdots 	& \vdots&  \\
\ldots \arrow{r}	&  K^{p,q+1} \arrow{u}\arrow{r}{\partial^{p,q+1}_h}	& K^{p+1,q+1}\arrow{u}\arrow{r}	& \ldots \\
\ldots \arrow{r}	& K^{p,q} \arrow{r}{\partial^{p,q}_h} \arrow{u}{\partial^{p,q}_c} & K^{p+1,q} \arrow{u}{\partial^{p+1,q}_c}\arrow{r}	& \ldots \\
	& \vdots \arrow{u}	&\vdots \arrow{u}  &	
\end{tikzcd}
\end{center}
\item \textbullet the horizontal boundary maps $\partial^{p,q}_h:K^{p,q}\to K^{p+1,q}$ are given by $\bigoplus\limits_{\textrm{codim}(Z^1_i)=p} (\partial^p_{S,i}\otimes 1)$ ;
\item \textbullet the vertical boundary maps $\partial^{p,q}_c:K^{p,q}\to K^{p,q+1}$ are given by $\bigoplus\limits_{\textrm{codim}(Z^2_j)=q} (1\otimes \partial^q_{F,j})$.\\
\indent
The previous argument on the boundary maps $\partial^l_E$ of $\textrm{Cousin}_{\{E_l\}}({\mathcal L}_1\otimes {\mathcal L}_2)$ shows that
\[
Tot(K) = \textrm{Cousin}_{\{E_l\}}({\mathcal L}_1\otimes {\mathcal L}_2)
\]
where $Tot(K)$ denotes the total complex of $K$. Since $K^{p,q}$ vanishes for $p<0$ or $q<0$, the bicomplex $^cH_{*,*}(K)$ obtained from $K$ by taking homology of 
columns gives rise to a spectral sequence whose $E^2$-page is given by
\[
E_{p,q}^2=H_p(^cH_{q,*}(K))
\]
converging to $H_*(Tot(K))$. But by Proposition \ref{Cousin-Kempf}, we have 
\[
H_n(Tot(K))=H_n(\textrm{Cousin}_{\{E_l\}}({\mathcal L}_1\otimes {\mathcal L}_2))=H^n(E,{\mathcal L}_1\otimes {\mathcal L}_2)
\]
and
\[
\begin{array}{rcl}
H_p(^cH_{q,*}(K))&=&H_p(\bigoplus\limits_{\textrm{codim}(Z^1_i)=*}H^*_{Z^1_i}(U^1_i,{\mathcal L}_1)\otimes H^q(F,{\mathcal L}_2))\\
&=&H^p(S,{\mathcal L}_1)\otimes H^q(F,{\mathcal L}_2)
\end{array}
\]
If either ${\mathcal L}_1$ or ${\mathcal L}_2$ has its cohomology concentrated in a single degree, then the last claim is obvious.
\end{proof}

\section{Equivariance}

Let $G_1$ and $G_2$ be two reductive algebraic groups. In this section, we shall take into account actions of $G_1\times G_2$ in the previous results. The key point will come from \cite[Lemma 11.6 ]{Ke1}.\\
\indent
We start to show the following lemma:
\begin{lemma}\label{equivariance-lemma}
With notations and settings of Lemma \ref{isomorphism-flat-modules}, if $G_1$ acts on $X$ in such a 
way that $Z$ is $G_1$-stable, and that both ${\mathcal F}$ and ${\mathcal G}$ are $G_1$-linearized, then
the isomorphisms $t_{\mathcal{F},\mathcal{G}}^p$ given by Lemma \ref{isomorphism-flat-modules} are
$G_1$-equivariant.
\end{lemma}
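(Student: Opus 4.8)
The plan is to trace through the construction of the natural maps $t^p_{\mathcal{F},\mathcal{G}}$ in Lemma \ref{isomorphism-flat-modules} and check that each ingredient is $G_1$-equivariant. First I would recall that, since $Z$ is $G_1$-stable, the local cohomology sheaves ${\mathcal H}^p_Z(-)$ inherit a $G_1$-linearization functorially: this is precisely the content of \cite[Lemma 11.6]{Ke1}, which says that for a $G_1$-linearized sheaf on a $G_1$-scheme, the local cohomology along a $G_1$-stable closed subscheme is again $G_1$-linearized, and that $G_1$-equivariant morphisms of linearized sheaves induce $G_1$-equivariant maps on local cohomology. In particular, if ${\mathcal F}$ and ${\mathcal G}$ are $G_1$-linearized, so is ${\mathcal G}\otimes_{{\mathcal O}_X}{\mathcal F}$, and so are ${\mathcal H}^p_Z({\mathcal G})$ and ${\mathcal H}^p_Z({\mathcal G}\otimes{\mathcal F})$. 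The map $t^p_{\mathcal{F},\mathcal{G}}$ is assembled from the natural multiplication map ${\mathcal H}^p_Z({\mathcal G})\otimes{\mathcal F}\to{\mathcal H}^p_Z({\mathcal G}\otimes{\mathcal F})$, which is induced by functoriality of ${\mathcal H}^p_Z$ applied to the (${\mathcal O}_X$-linear, hence automatically $G_1$-equivariant once we tensor linearized sheaves) morphisms coming from cup product; so it is $G_1$-equivariant by \cite[Lemma 11.6]{Ke1}.

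Next I would address the two proofs of isomorphism separately, since part (i) and part (ii) of Lemma \ref{isomorphism-flat-modules} argue differently, but for the present statement it suffices to observe that $t^p_{\mathcal{F},\mathcal{G}}$ is the \emph{same} natural map in both cases — equivariance is a property of that map, not of the proof that it is an isomorphism. So once we know $t^p_{\mathcal{F},\mathcal{G}}$ is a $G_1$-equivariant morphism of $G_1$-linearized sheaves and (by Lemma \ref{isomorphism-flat-modules}) an isomorphism of ${\mathcal O}_X$-modules, it is automatically an isomorphism of $G_1$-linearized sheaves: the inverse of a $G_1$-equivariant isomorphism is $G_1$-equivariant. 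Thus the statement reduces entirely to the equivariance of the natural map, which is handled in the previous paragraph.

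The one point deserving care — and the main (minor) obstacle — is to check that the $G_1$-linearizations used here are the ones compatible with the construction, i.e. that the natural multiplication map $t^p$ respects linearizations on the nose rather than merely up to a cocycle twist. I would handle this by noting that everything in sight is built by applying the functor ${\mathcal H}^p_Z$ (which by \cite[Lemma 11.6]{Ke1} carries $G_1$-linearized sheaves and $G_1$-equivariant maps to $G_1$-linearized sheaves and $G_1$-equivariant maps) to the canonical ${\mathcal O}_X$-bilinear pairing ${\mathcal G}\times{\mathcal F}\to{\mathcal G}\otimes{\mathcal F}$, which is manifestly $G_1$-equivariant for the diagonal linearization on the tensor product. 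No choices are involved, so no cocycle discrepancy can arise. This completes the plan; the resulting argument is short, essentially a bookkeeping of functoriality built on \cite[Lemma 11.6]{Ke1}.
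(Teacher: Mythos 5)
Your proposal is correct and follows essentially the same route as the paper: the paper's proof likewise unwinds Kempf's construction of the $G_1$-linearization on ${\mathcal H}^p_Z(-)$ (via the action map $\rho$ and projection $\pi$, citing \cite[Lemma 11.3]{Ke1}) and then verifies that the natural map $t^p_{\mathcal{F},\mathcal{G}}$ intertwines the linearizations, which it does by writing out explicitly the commutative diagram comparing $\rho^*$ and $\pi^*$ of both sides — exactly the naturality/``no choices involved'' point you make in words. Your additional observation that equivariance is a property of the single natural map, independent of which part of Lemma \ref{isomorphism-flat-modules} establishes that it is an isomorphism, is a fair and accurate streamlining of the same argument.
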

\begin{proof}
We recall here the $G_1$-linearization of ${\mathcal H}^p_Z({\mathcal F})$ given by Kempf:
Let $\rho : G_1\times X\to X$ be the action map, and let $\pi$ denote the second projection. The $G_1$-linearization of ${\mathcal F}$ gives an isomorphism $\rho^*{\mathcal F}\to \pi^*{\mathcal F}$.
Since $Z$ is $G_1$-stable, we get a map $\rho^*({\mathcal H}^p_Z({\mathcal F}))\to {\mathcal H}^p_{G_1\times Z}(\pi^*{\mathcal F})$, 
and composing it with the isomorphism ${\mathcal H}^p_{G_1\times Z}(\pi^*{\mathcal F}) \to \pi^*({\mathcal H}^p_Z({\mathcal F}))$ gives 
the required natural $G_1$-linearization on ${\mathcal H}^p_Z({\mathcal F})$, see \cite[Lemma 11.3]{Ke1}. The commutativity of the diagram 
(whose horizontal arrows are isomorphisms thanks to the linearizations, and all vertical arrows are isomorphisms by flatness)
\begin{center}
\begin{tikzcd}
\rho^*{\mathcal H}^p_Z({\mathcal F}\otimes{\mathcal G}) \arrow{r}				&{\mathcal H}^p_Z(\pi^*({\mathcal F}\otimes {\mathcal G})) \arrow{r}				&\pi^*{\mathcal H}^p_Z({\mathcal F}\otimes {\mathcal G})\\
\rho^*({\mathcal H}^p_Z({\mathcal F})\otimes{\mathcal G}) \arrow{u}\arrow{r}	&{\mathcal H}^p_Z(\pi^*{\mathcal F})\otimes \pi^*{\mathcal G} \arrow{u}\arrow{r}	&\pi^*({\mathcal H}^p_Z({\mathcal F})\otimes {\mathcal G}) \arrow{u}
\end{tikzcd}
\end{center}
is enough to conclude the lemma.
\end{proof}

Lemma \ref{equivariance-lemma}  is enough to prove equivariant analogues of theorems \ref{kunneth-local} and \ref{cohomology-fibration}. Let us keep the notations and settings of Theorem \ref{kunneth-local}. 
Assume that $G_1$ acts on $X$ and that $Z_1$ is $G_1$-stable, that $G_2$ acts on $Y$ and that $Z_2$ is $G_2$-stable, and that the $L_i$ are 
$G_i$-linearized. 
Equip ${\mathcal L}_1\otimes {\mathcal L}_2$ with the induced $G_1\times G_2$-linearization. Then we have:
\begin{proposition}\label{equivariant-kunneth}
The isomorphism given in Theorem \ref{kunneth-local} is an isomorphism of $G_1\times G_2$-modules.
\end{proposition}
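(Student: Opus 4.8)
The plan is to trace through the proof of Theorem \ref{kunneth-local} and check that every isomorphism used there is compatible with the relevant group actions, using Lemma \ref{equivariance-lemma} as the fundamental input. Recall that the proof of Theorem \ref{kunneth-local} proceeds in three stages: first the identification $H^{l_1+l_2}_{Z_1\times Z_2}(X\times Y,{\mathcal L}_1\otimes{\mathcal L}_2)=H^0(X\times Y,{\mathcal H}^{l_1+l_2}_{Z_1\times Z_2}({\mathcal L}_1\otimes{\mathcal L}_2))$; second, the collapse of the spectral sequence $E_2^{p,q}={\mathcal H}^p_{Z_1\times Y}({\mathcal H}^q_{X\times Z_2}({\mathcal L}_1\otimes{\mathcal L}_2))$ to a single term ${\mathcal H}^{l_1}_{Z_1\times Y}({\mathcal H}^{l_2}_{X\times Z_2}({\mathcal L}_1\otimes{\mathcal L}_2))$; third, the two applications of Lemma \ref{isomorphism-flat-modules} giving ${\mathcal H}^{l_1}_{Z_1\times Y}({\mathcal L}_1)\otimes{\mathcal H}^{l_2}_{X\times Z_2}({\mathcal L}_2)$; and finally taking global sections.

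First I would set up the actions on the product: $G_1\times G_2$ acts on $X\times Y$, and since $Z_1$ is $G_1$-stable and $Z_2$ is $G_2$-stable, both $Z_1\times Y$ and $X\times Z_2$ (hence their intersection $Z_1\times Z_2$) are $G_1\times G_2$-stable. The sheaf ${\mathcal L}_1\otimes{\mathcal L}_2=p_1^*L_1\otimes p_2^*L_2$ carries the tensor-product $G_1\times G_2$-linearization. By \cite[Lemma 11.3]{Ke1} (as recalled in the proof of Lemma \ref{equivariance-lemma}), each local cohomology sheaf appearing in the tower ${\mathcal H}^p_{Z_1\times Y}({\mathcal H}^q_{X\times Z_2}(-))$ inherits a canonical $G_1\times G_2$-linearization, and Kempf's construction of the spectral sequence of Lemma (ii) is functorial, so all its differentials and edge maps are $G_1\times G_2$-equivariant; thus the surviving term ${\mathcal H}^{l_1}_{Z_1\times Y}({\mathcal H}^{l_2}_{X\times Z_2}({\mathcal L}_1\otimes{\mathcal L}_2))$ is $G_1\times G_2$-linearized, and its identification with $H^{l_1+l_2}_{Z_1\times Z_2}$ via global sections is equivariant since $H^0$ of a linearized sheaf is a $G_1\times G_2$-module functorially.

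Next I would verify that the two applications of Lemma \ref{isomorphism-flat-modules} are equivariant. For the inner step ${\mathcal H}^q_{X\times Z_2}({\mathcal L}_1\otimes{\mathcal L}_2)\simeq{\mathcal L}_1\otimes{\mathcal H}^q_{X\times Z_2}({\mathcal L}_2)$ we apply Lemma \ref{equivariance-lemma} with ambient scheme $X\times Y$, group $G_1\times G_2$, closed subscheme $X\times Z_2$ (which is $G_1\times G_2$-stable), flat sheaf ${\mathcal F}={\mathcal L}_1$ and sheaf ${\mathcal G}={\mathcal L}_2$, both of which are $G_1\times G_2$-linearized (pulled back from the respective factors); Lemma \ref{equivariance-lemma} then gives that $t^q_{{\mathcal L}_1,{\mathcal L}_2}$ is $G_1\times G_2$-equivariant. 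Likewise for the outer step ${\mathcal H}^{l_1}_{Z_1\times Y}({\mathcal L}_1\otimes{\mathcal H}^{l_2}_{X\times Z_2}({\mathcal L}_2))\simeq{\mathcal H}^{l_1}_{Z_1\times Y}({\mathcal L}_1)\otimes{\mathcal H}^{l_2}_{X\times Z_2}({\mathcal L}_2)$ we apply it with closed subscheme $Z_1\times Y$, flat sheaf ${\mathcal G}={\mathcal L}_1$, and ${\mathcal F}={\mathcal H}^{l_2}_{X\times Z_2}({\mathcal L}_2)$, which is $G_1\times G_2$-linearized and flat (being invertible on the Cohen--Macaulay $X\times Y$, by the Remark after Lemma \ref{isomorphism-flat-modules} together with Lemma \ref{isomorphism-flat-modules}(ii)). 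Composing these equivariant isomorphisms with the equivariant global-sections functor yields the claimed isomorphism of $G_1\times G_2$-modules.

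The main obstacle I anticipate is purely bookkeeping: one must be careful that the canonical linearization on ${\mathcal H}^p_Z({\mathcal F})$ used in Lemma \ref{equivariance-lemma} is exactly the one making the spectral sequence of Lemma (ii) equivariant, and that the Künneth-type identification of the bottom term of the collapsed spectral sequence with the full local cohomology group uses only these canonical linearizations. This amounts to invoking \cite[Lemma 11.6]{Ke1}, which guarantees precisely that Kempf's constructions with local cohomology are functorial in a way compatible with group actions; granting that, no genuinely new idea is needed beyond carefully threading the $G_1\times G_2$-equivariance through each arrow in the proof of Theorem \ref{kunneth-local}.
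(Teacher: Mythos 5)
Your proposal is correct and follows essentially the same route as the paper: the paper's proof simply invokes Lemma \ref{equivariance-lemma} to conclude that the sheaf isomorphism ${\mathcal H}^{p+q}_{Z_1\times Z_2}({\mathcal L}_1\otimes{\mathcal L}_2)\simeq{\mathcal H}^p_{Z_1\times Y}({\mathcal L}_1)\otimes{\mathcal H}^q_{X\times Z_2}({\mathcal L}_2)$ is $G_1\times G_2$-equivariant and then takes global sections. You supply additional (correct) bookkeeping on the equivariance of the spectral sequence and of each application of Lemma \ref{isomorphism-flat-modules}, which the paper leaves implicit.
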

\begin{proof}
By Lemma \ref{equivariance-lemma}, the isomorphism
\[
{\mathcal H}^{p+q}_{Z_1\times Z_2}({\mathcal L}_1\otimes {\mathcal L}_2)\simeq {\mathcal H}^p_{Z_1\times Y}({\mathcal L}_1)\otimes {\mathcal H}^q_{X\times Z_2}({\mathcal L}_2)
\]
obtained in the proof is an isomorphism of $G_1\times G_2$-equivariant sheaves, hence taking global sections, we get an isomorphism of $G_1\times G_2$-modules.
\end{proof}
Now take the notations and settings of Theorem \ref{cohomology-fibration}. As in the introduction, replacing $G_i$ by a finite cover we assume that $G_i$ is factorial for $i=1, 2$. Assume $G_1\times G_2$ acts on $E$, $G_1$ acts on $S$ and trivially on $F$, and $G_2$ acts on $F$ and trivially on $S$, such that the morphism $E\to S$ is $G_2$ invariant and $G_1$ equivariant, and the inclusions of the fibers $F\to E$ are $G_2$-equivariant and $G_1$-invariant. Assume that the subsets $Z^1_i$ and $U^1_i$ are $G_1$-stable, 
that the subsets $Z^2_j$ and $U^2_j$ are $G_2$-stable, and that the ${\mathcal L}_i$ are $G_i$-linearized (hence $G_1\times G_2$-linearized because the action of the other group $G_j$ is trivial). 
Note that we have an exact sequence

\begin{equation}\label{exact-equiv}
0\to \textrm{Pic}^{G_1}(S) \to \textrm{Pic}^{G_1\times G_2}(E) \to \textrm{Pic}^{G_2}(F) \to 0
\end{equation}
since it sits in the following diagram with exact columns whose first and last rows are exact:
\begin{center}
\begin{tikzcd}
			&0 \arrow{d}									&0 \arrow{d}									
			&0 \arrow{d}									&\\
0 \arrow{r}	&{\mathcal X}(G_1) \arrow{d}\arrow{r}			&{\mathcal X}(G_1\times G_2) \arrow{d}{i_E}\arrow{r}	
			&{\mathcal X}(G_2) \arrow{d}{i_F}\arrow{r}		&0\\
			&\textrm{Pic}^{G_1}(S) \arrow{d}\arrow{r}		&\textrm{Pic}^{G_1\times G_2}(E) \arrow{d}{p_E}\arrow{r}
			&\textrm{Pic}^{G_2}(F) \arrow{d}{p_F}			&\\
0 \arrow{r}	&\textrm{Pic}(S) \arrow{d}\arrow{r}				&\textrm{Pic}(E) \arrow{d}\arrow{r}
			&\textrm{Pic}(F) \arrow{d}\arrow{r}				&0\\
			&0	&0	&0	&
\end{tikzcd}
\end{center}
Recall that we have fixed a section $s:\textrm{Pic}(F)\to\textrm{Pic}(E)$. We also have a natural section $s_{\mathcal X}:{\mathcal X}(G_2)\to{\mathcal X}(G_1\times G_2)$ sending a character $\chi$ to the character $\tilde{\chi}(g_1,g_2)=\chi(g_2)$.
Since $\textrm{Pic}(F)$ is projective module, so is $\textrm{Pic}^{G_2}(F)$, and we get a (non-canonical) section
$\textrm{Pic}^{G_2}(F) \to \textrm{Pic}^{G_1\times G_2}(E)$. Let us fix such a section $\tilde{s}$. Then it satisfies $\tilde{s}\circ i_F=i_E\circ s_{\mathcal X}$ and $p_E\circ\tilde{s}=s\circ p_F$. The above short exact sequence splits and we have 
\begin{equation}\label{exact-equiv1}
 \textrm{Pic}^{G_1\times G_2}(E)=\textrm{Pic}^{G_1}(S) \oplus \textrm{Pic}^{G_2}(F).
\end{equation}
 This induces a $G_1\times G_2$-linearization on ${\mathcal L}_1 \otimes {\mathcal L}_2$, and we get:
\begin{proposition}\label{equivariant-cohomology-fibration}
The isomorphism given in Theorem \ref{cohomology-fibration} is an isomorphism of $G_1\times G_2$-modules.
\end{proposition}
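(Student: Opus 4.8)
The plan is to revisit the proof of Theorem \ref{cohomology-fibration} and check that every isomorphism used there is compatible with the $G_1\times G_2$-action, once we equip everything with the linearizations prescribed by the splitting \eqref{exact-equiv1}. The backbone of that proof is the identification $Tot(K)=\textrm{Cousin}_{\{E_l\}}({\mathcal L}_1\otimes{\mathcal L}_2)$ of complexes, followed by the two-step spectral-sequence bookkeeping. So the task splits into three parts: first, make the Cousin complex itself $G_1\times G_2$-equivariant; second, make the bicomplex $K$ and the K\"unneth identification $K^{p,q}\simeq H^p_{Z^1_i}(U^1_i,{\mathcal L}_1)\otimes H^q_{Z^2_j}(U^2_j,{\mathcal L}_2)$ equivariant; third, observe that the spectral sequence of a bicomplex of $G_1\times G_2$-modules is a spectral sequence of $G_1\times G_2$-modules, so the final direct sum decomposition is one of $G_1\times G_2$-modules (and, in the single-degree-collapse case, literally an isomorphism of such modules).

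First I would record that each term $H^l_{\overline{E_l}/\overline{E_{l+1}}}(E,{\mathcal L}_1\otimes{\mathcal L}_2)$ carries a natural $G_1\times G_2$-module structure: the filtration $\{\overline{E_l}\}$ is $G_1\times G_2$-stable because each $Z^1_i$ is $G_1$-stable and each $Z^2_j$ is $G_2$-stable (and the other group acts trivially), and ${\mathcal L}_1\otimes{\mathcal L}_2$ is $G_1\times G_2$-linearized by construction; hence the sheaves ${\mathcal H}^l_{\overline{E_l}/\overline{E_{l+1}}}({\mathcal L}_1\otimes{\mathcal L}_2)$ are $G_1\times G_2$-linearized by Kempf's construction (\cite[Lemma 11.3]{Ke1}), and taking global sections gives $G_1\times G_2$-modules. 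The boundary maps of the Cousin complex are the connecting maps of $G_1\times G_2$-equivariant sheaf sequences, so they are $G_1\times G_2$-equivariant, and the whole Cousin complex is a complex of $G_1\times G_2$-modules. The excision isomorphisms $(*)$ are equivariant for the same reason (excision for local cohomology is functorial, hence equivariant when the data are). This makes $\textrm{Cousin}_{\{E_l\}}({\mathcal L}_1\otimes{\mathcal L}_2)$, and so each of its homology groups $H^n(E,{\mathcal L}_1\otimes{\mathcal L}_2)$, into $G_1\times G_2$-modules; by functoriality this agrees with the usual $G_1\times G_2$-structure on sheaf cohomology.

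Next, the summands $H^{p+q}_{Z^1_i\times Z^2_j}(U^1_i\times U^2_j,{\mathcal L}_1\otimes{\mathcal L}_2)$ making up $K^{p,q}$ are $G_1\times G_2$-modules (here $U^1_i$ is $G_1$-stable, $U^2_j$ is $G_2$-stable, $Z^1_i\times Z^2_j$ is $G_1\times G_2$-stable, and ${\mathcal L}_1$, ${\mathcal L}_2$ are $G_i$-linearized, hence $p_i^*{\mathcal L}_i$ is $G_1\times G_2$-linearized since the other factor acts trivially on the relevant variety). The identification $H^{p+q}_{Z^1_i\times Z^2_j}(U^1_i\times U^2_j,{\mathcal L}_1\otimes{\mathcal L}_2)\simeq H^p_{Z^1_i}(U^1_i,{\mathcal L}_1)\otimes H^q_{Z^2_j}(U^2_j,{\mathcal L}_2)$ is then $G_1\times G_2$-equivariant by Proposition \ref{equivariant-kunneth} applied with $(G_1,G_2)$ and with the trivial action of the complementary group; concretely this is Lemma \ref{equivariance-lemma} applied to the sheaf-level isomorphism ${\mathcal H}^{p+q}_{Z^1_i\times Z^2_j}({\mathcal L}_1\otimes{\mathcal L}_2)\simeq {\mathcal H}^p_{Z^1_i\times U^2_j}({\mathcal L}_1)\otimes{\mathcal H}^q_{U^1_i\times Z^2_j}({\mathcal L}_2)$. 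The horizontal differentials $\partial^{p,q}_h=\bigoplus(\partial^p_{S,i}\otimes 1)$ are $G_1$-equivariant (the maps $\partial^p_{S,i}$ come from the $G_1$-equivariant Cousin complex of ${\mathcal L}_1$ on $S$) and $G_2$-equivariant because they act as the identity on the ${\mathcal L}_2$-factor; dually for the vertical differentials $\partial^{p,q}_c$. Hence $K$ is a first-quadrant bicomplex of $G_1\times G_2$-modules, and the identification $Tot(K)=\textrm{Cousin}_{\{E_l\}}({\mathcal L}_1\otimes{\mathcal L}_2)$ is an identification of complexes of $G_1\times G_2$-modules because, matching term by term via $(*)$, the differentials agree and are all equivariant.

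Finally, the spectral sequence obtained by first taking homology of columns is, from its construction, a spectral sequence in the abelian category of $G_1\times G_2$-modules: every page and every differential is built functorially from $K$. Therefore $E^2_{p,q}=H_p({}^cH_{q,*}(K))\cong H^p(S,{\mathcal L}_1)\otimes H^q(F,{\mathcal L}_2)$ is an isomorphism of $G_1\times G_2$-modules (the $G_1$-structure sitting on the first factor, the $G_2$-structure on the second), and it converges to $H^n(E,{\mathcal L}_1\otimes{\mathcal L}_2)$ as $G_1\times G_2$-modules. When ${\mathcal L}_1$ or ${\mathcal L}_2$ has cohomology in a single degree the spectral sequence degenerates and the resulting filtration is trivial, so the isomorphism $H^n(E,{\mathcal L}_1\otimes{\mathcal L}_2)=\bigoplus_{p+q=n}H^p(S,{\mathcal L}_1)\otimes H^q(F,{\mathcal L}_2)$ is an honest isomorphism of $G_1\times G_2$-modules. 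The only real subtlety — and the step I would be most careful about — is checking that the $G_1\times G_2$-linearization on ${\mathcal L}_1\otimes{\mathcal L}_2$ coming from the splitting \eqref{exact-equiv1} is the same one used throughout, i.e. that the section $\tilde s$ is compatible with $s$ and $s_{\mathcal X}$ (the relations $\tilde s\circ i_F=i_E\circ s_{\mathcal X}$ and $p_E\circ\tilde s=s\circ p_F$ recorded before the statement), so that no spurious character of $G_1\times G_2$ is introduced when passing between the equivariant Picard groups of $S$, $E$ and $F$; everything else is a routine, if lengthy, verification that Kempf's linearizations are natural.
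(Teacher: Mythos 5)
Your argument is correct and follows essentially the same route as the paper: the paper's proof is a one-line appeal to Kempf's Theorem 11.6(e), which is precisely the statement that the Cousin complex of a linearized sheaf along a stable filtration is a complex of modules computing the cohomology equivariantly, and your verification unpacks that citation together with the already-established equivariant K\"unneth isomorphism (Proposition \ref{equivariant-kunneth}). Your closing remark about checking compatibility of the section $\tilde s$ with $s$ and $s_{\mathcal X}$ is exactly the point the paper addresses just before the statement, so nothing is missing.
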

\begin{proof}
This is a direct consequence of \cite[Theorem 11.6 (e)]{Ke1}.
\end{proof}
Now assume that $k$ is of characteristic $0$. Then we can lower the hypotheses to obtain a weaker result. 
Namely, we do not need to assume that the closed subschemes occurring in the filtration are stable: in the setting of Theorem \ref{cohomology-fibration}, 
let $\mathcal L$ be a $G_1\times G_2$-linearized line bundle on $E$.

 By (\ref{exact-equiv1}), we can write $\mathcal L= {\mathcal L}_1\otimes {\mathcal L}_2$, where $\mathcal L_1$ is $G_1$-linearizated line bundle on $S$ and $\mathcal L_2$ is $G_2$-linearizated line bundle on $F$. 
 Let $\mathfrak{g}_i$ be the Lie algebra of $G_i$. Then we have:
\begin{proposition}
The isomorphism given in Theorem \ref{cohomology-fibration} is an isomorphism of $\mathfrak{g}_1\times \mathfrak{g}_2$-modules.
\end{proposition}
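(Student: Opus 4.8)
The plan is to reduce the statement, via characteristic-zero representation theory, to a comparison of characters over a maximal torus, which is precisely what the torus-equivariant version already established provides. First I would record the formal input. Since $E$, $S$ and $F$ are complete and the sheaves involved are linearized, $H^n(E,\mathcal L)$, $H^p(S,\mathcal L_1)$ and $H^q(F,\mathcal L_2)$ are finite-dimensional rational representations of $G_1\times G_2$, of $G_1$, and of $G_2$ respectively; in particular all three sides of the isomorphism of Theorem \ref{cohomology-fibration} carry canonical $\mathfrak g_1\times\mathfrak g_2$-module structures, obtained by differentiating the corresponding group actions. Because $G_1$ and $G_2$ are connected reductive and $\mathrm{char}(k)=0$, a $\mathfrak g_i$-submodule of a rational $G_i$-module is automatically $G_i$-stable; consequently the forgetful functor from finite-dimensional rational $G_i$-modules to $\mathfrak g_i$-modules is fully faithful, and it will be enough to prove that $H^n(E,\mathcal L)$ and $\bigoplus_{p+q=n}H^p(S,\mathcal L_1)\otimes H^q(F,\mathcal L_2)$ are isomorphic as $G_1\times G_2$-modules.

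Next I would introduce a maximal torus $T_i\subseteq G_i$ and replace the given stratifications by a refinement that is $T_i$-stable stratum by stratum. In the situations of interest this costs nothing: a flag variety carries its Bruhat stratification, which is stable under a Borel subgroup containing $T_1$, and a toric variety carries its orbit stratification, which is stable under $T_2$; so one may assume each $Z^1_i,U^1_i$ is $T_1$-stable and each $Z^2_j,U^2_j$ is $T_2$-stable. With this choice, all the hypotheses preceding Proposition \ref{equivariant-cohomology-fibration} hold with $(T_1,T_2)$ in place of $(G_1,G_2)$: the tori are factorial, the $G_i$-linearizations of the $\mathcal L_i$ restrict to $T_i$-linearizations, and the splitting (\ref{exact-equiv1}) of the equivariant Picard group is compatible with restricting the structure group from $G_i$ to $T_i$. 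Applying that proposition to the pair $(T_1,T_2)$ then shows that the isomorphism of Theorem \ref{cohomology-fibration} is $T_1\times T_2$-equivariant; in particular the two sides have the same character as $T_1\times T_2$-modules.

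To finish, I would invoke the standard fact that for a connected reductive group $G_i$ in characteristic $0$ the restriction map $k[G_i]^{G_i}\to k[T_i]^{W_i}$ is an isomorphism, so that the character of a finite-dimensional representation of $G_i$ is determined by its restriction to $T_i$; together with complete reducibility, this shows that a finite-dimensional representation of $G_i$ — and likewise of $G_1\times G_2$ via $T_1\times T_2$ — is determined up to isomorphism by the character of its restriction to the maximal torus. Combining this with the previous step yields an isomorphism of $G_1\times G_2$-modules $H^n(E,\mathcal L)\cong\bigoplus_{p+q=n}H^p(S,\mathcal L_1)\otimes H^q(F,\mathcal L_2)$, hence an isomorphism of $\mathfrak g_1\times\mathfrak g_2$-modules by the first step; moreover the isomorphism produced in Theorem \ref{cohomology-fibration} is itself already $T_1\times T_2$-equivariant once the stratifications are chosen torus-stable. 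The main obstacle I anticipate is the middle step: one must be able to refine the stratification so that a maximal torus stabilizes every stratum (and check the compatibility of the Picard splittings under change of structure group) in order to feed Proposition \ref{equivariant-cohomology-fibration}; by contrast, the passage from $T_1\times T_2$-equivariance to $\mathfrak g_1\times\mathfrak g_2$-equivariance is a purely formal consequence of linear reductivity in characteristic $0$.
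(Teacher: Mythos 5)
Your argument takes a genuinely different route from the paper's, and while it would suffice for the application in Section 6, it does not prove the proposition as stated. The entire point of this proposition in the paper is to drop the hypothesis that the strata are stable under any prescribed subgroup: the paper's proof passes to the formal completions $\widehat{G_i}$ at the identity, uses Kempf's Lemma 11.1 to put a $\widehat{G_1}\times\widehat{G_2}$-linearization on every term of the Cousin bicomplex (the formal group does not move the supports, so no stability of the $Z^1_i\times Z^2_j$ is needed), and then differentiates in characteristic $0$ to get $\mathfrak{g}_1\times\mathfrak{g}_2$-module structures on the complex and an equivariant isomorphism. Your reduction to $T_1\times T_2$-equivariance via Proposition \ref{equivariant-cohomology-fibration} reinstates exactly the kind of stability hypothesis the proposition is designed to avoid: you must assume, or arrange by refinement, that a maximal torus stabilizes every $Z^1_i$, $Z^2_j$, $U^1_i$, $U^2_j$. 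This does hold for the Bruhat and orbit stratifications used in Section 6 (the paper notes that $U_w$ is $T$-stable and $U_\sigma$ is $T'$-stable), but it is an additional hypothesis absent from the general statement; moreover a refinement of the stratification changes the Cousin complex, hence also the isomorphism whose equivariance is being asserted.

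There is a second, more intrinsic gap: your character-theoretic step (equal $T_1\times T_2$-characters plus complete reducibility of rational representations of a connected reductive group in characteristic $0$) only yields an \emph{abstract} isomorphism of $G_1\times G_2$-modules between $H^n(E,\mathcal{L})$ and $\bigoplus_{p+q=n}H^p(S,\mathcal{L}_1)\otimes H^q(F,\mathcal{L}_2)$. The proposition asserts that the specific isomorphism produced by Theorem \ref{cohomology-fibration} is $\mathfrak{g}_1\times\mathfrak{g}_2$-equivariant, and a $T_1\times T_2$-equivariant map between two abstractly isomorphic $G_1\times G_2$-modules need not be $\mathfrak{g}_1\times\mathfrak{g}_2$-equivariant. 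For Corollary \ref{horospherical} and the (explicitly non-canonical) isomorphism stated in the introduction, your weaker conclusion is enough, and in that situation your argument is a legitimate and more elementary alternative; but as a proof of the proposition in its stated generality it falls short on both counts above.
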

\begin{proof}
For an affine algebraic group $G$, let $\widehat{G}$ denote the formal completion of $G$ at the identity.
The $G_1\times G_2$-linearization of ${\mathcal L}_1\otimes {\mathcal L}_2$ gives rise to a $\widehat{G_1\times G_2}$-linearization on ${\mathcal L}_1\otimes {\mathcal L}_2$. 
Then by using the natural map $\widehat{G_1}\times \widehat{G_2}\to \widehat{G_1 \times G_2}$, we get $\widehat{G_1}\times \widehat{G_2}$-linearization. 
Hence by \cite[Lemma 11.1(a)]{Ke1}, the cohomology groups $H^{l_1+l_2}_{Z^1_i\times Z^2_j}(U^1_i\times U^2_j, {\mathcal L}_1\otimes{\mathcal L}_2)$, $H^{l_1}_{Z^1_i}(U^1_i,{\mathcal L_1})$ 
and $H^{l_2}_{Z^2_j}(U^2_j,{\mathcal L_2})$ 
have respectively a natural structure of $\widehat{G_1}\times\widehat{G_2}$, $\widehat{G_1}$ and $\widehat{G_2}$-module, that we explicitly describe here.\\
\indent
A $\widehat{G}$-linearization of a sheaf ${\mathcal F}$ of ${\mathcal O}_X$-modules on a scheme $X$ is given by an inverse system of morphisms 
${\mathcal F}\to k[G]/\mathfrak{m}^i\otimes {\mathcal F}$ (where $\mathfrak{m}$ denotes the ideal of regular functions vanishing at the identity). 
Since $k[G]/\mathfrak{m}^i$ are finite dimensional vector spaces and  the local cohomology functor commutes with direct sums, we have that 
 for any closed subsets $Z_2\subset Z_1\subset X$, there is an inverse system $H^p_{Z_1/Z_2}(X,{\mathcal F})\to k[G]/\mathfrak{m}^i\otimes H^p_{Z_1/Z_2}(X,{\mathcal F})$, 
which is precisely the required linearization.\\
\indent
Now since the linearization construction is detailed, it is immediate that the isomorphism
\[
H^{l_1+l_2}_{Z^1_i\times Z^2_j}(U^1_i\times U^2_j, {\mathcal L}_1\otimes{\mathcal L}_2)\to H^{l_1}_{Z^1_i}(U^1_i, {\mathcal L_1})\otimes H^{l_2}_{Z^2_j}(U^2_j, {\mathcal L_2})
\]
is an isomorphism of $\widehat{G_1}\times \widehat{G_2}$-modules.
Notice that for example we can rewrite the $H^{l_1}_{Z^1_i}(U^1_i, {\mathcal L_1})=H^{l_1}_{Z^1_i/\partial Z^1_i}(S, {\mathcal L})$, hence these modules are exactly the direct summands of 
the modules occurring in the Cousin complexes. 
Using \cite[Lemma 11.1(d)]{Ke1}, the Cousin complex of ${\mathcal L}_1\otimes {\mathcal L}_2$ relatively to $\{E_l\}$ is a complex of $\widehat{G_1}\times \widehat{G_2}$-modules. 
Hence the isomorphism given in Theorem \ref{cohomology-fibration} is an isomorphism of $\widehat{G_1}\times \widehat{G_2}$-modules. 
Since we are in characteristic 0, this gives us an isomorphism of Lie algebra $\mathfrak{g}_1\times \mathfrak{g}_2$-modules.
\end{proof}

\section{Application to horospherical varieties}
Let $G$ be a complex connected reductive group. Let $X$ be a smooth toroidal horospherical variety. 
Let $T\subset B$ be a maximal torus of $G$. We need to stratify $X$ in such a way that we can apply Theorem \ref{cohomology-fibration}. 
This is done by looking at the stratification of $X$ by $B\times T'$-orbits. The $B\times T'$-orbits are of the form $BwP\times^P C_{\sigma}$, where $w\in W^P$ 
and $C_{\sigma}$ is the $T'$-orbit of $Y$ associated to a cone $\sigma$ (living in the fan defining $Y$).\\
\indent
$X$ being horospherical and toroidal, we can apply the local structure theorem (Proposition \ref{local_structure}).
Let $U_w=w(w_0^P)^{-1}Bw_0^PP/P\subset G/P$ and 
$U_{\sigma}=\textrm{Spec}(k[M\cap \sigma^{\vee}])\subset Y$. 
Let $p:X\to G/P$ be the projection map. 
Then $BwP\times^P C_{\sigma}$ is closed in $U_wP\times^P U_{\sigma}$, 
which is open in $p^{-1}(U_w)=w(w_0^P)^{-1}.X_0$. 
But $p^{-1}(U_w)\simeq U_w\times Y$ by the local structure theorem. 
Hence the $B\times T'$-orbits are 
isomorphic to $BwP/P \times C_{\sigma}$, and they are closed in an open subset of $X$ of the form 
$U_w\times U_{\sigma}$.
Remark that $U_w$ is $T$-stable, and that $U_{\sigma}$ is $T'$-stable.\\
\indent Both $BwP/P$ and $C_{\sigma}$ are 
locally complete intersections, of respective dimensions $l(w)$ and $\dim(\sigma)$. Assume that $X$ is complete smooth toroidal horospherical variety.
Then we have the action of $G\times T'$ on $X$ such that $X\to G/P$ is $G$-equivariant. By the isomorphism (\ref{equi}), we have $\textrm{Pic}^{G\times T'}(X)\simeq \textrm{Pic}^G(G/P)\oplus \textrm{Pic}^{T'}(Y)$. Hence 
we write any $G$-linearized line bundle $\mathcal L$ on $X$ as $\mathcal L\simeq \mathcal L_1 \otimes \mathcal L_2$, where $\mathcal L_1$ and $\mathcal L_2$ are line bundles on $G/P$ and on $Y$ respectively.
By Borel-Weil-Bott theorem, the cohomology of line bundles on $G/P$ is concentrated in at most one degree. 
Hence we can apply Theorem \ref{cohomology-fibration} to complete toroidal horospherical varieties.  Then,
\begin{corollary}\label{horospherical}
We have an isomorphism of $\mathfrak g \times T'$-modules 
\[
H^n(X, \mathcal L) = \bigoplus\limits_{p+q=n}H^p(G/P,{\mathcal L}_1)\otimes H^q(Y,{\mathcal L}_2)
\]
\end{corollary}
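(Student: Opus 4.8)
The plan is to view $X=G\times^P Y$ as the total space of the locally trivial fibration $p\colon X\to G/P$ with fibre $Y$ and to apply Theorem~\ref{cohomology-fibration} with $S=G/P$, $F=Y$ and $E=X$. First I would check the standing hypotheses of Section~4: $G/P$ and $Y$ are smooth, connected and complete (the fibre $Y$ is complete, being a fibre of the proper map $p$ over the complete variety $G/P$); $Y$ is rational because it contains the dense torus $T'$; and $\mathrm{Pic}(Y)$ is a free, hence projective, $\mathbb Z$-module since $Y$ is a smooth complete toric variety. Next I would set up the two stratifications. On the base, take the Bruhat decomposition of $G/P$ into the cells $BwP/P$, $w\in W^P$: each is affine, irreducible and locally closed of dimension $l(w)$, and is a linear --- hence locally complete intersection --- closed subvariety of the affine open translate $U_w$. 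On the fibre, take the decomposition of $Y$ into its $T'$-orbits $C_\sigma$, $\sigma\in\Delta$: each is affine, irreducible and locally closed of dimension $\dim\sigma$, and is a locally complete intersection in the affine chart $U_\sigma=\mathrm{Spec}(k[M\cap\sigma^\vee])$. The local structure theorem (Proposition~\ref{local_structure}) ties these together: it identifies the $B\times T'$-orbits of $X$ with the products $BwP/P\times C_\sigma$ and shows each such orbit is closed in an affine open of $X$ of the form $U_w\times U_\sigma$ --- which is precisely conditions~(i) and~(ii) of Theorem~\ref{cohomology-fibration}.

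With the geometry arranged, I would split the line bundle: by the canonical splitting~(\ref{equi}) a $G$-linearized $\mathcal L$ is $G\times T'$-linearized and decomposes as $\mathcal L\simeq\mathcal L_1\otimes\mathcal L_2$, where $\mathcal L_1$ is the pullback under $p$ of a $G$-linearized line bundle on $G/P$ and $\mathcal L_2$ is a $T'$-linearized line bundle on $Y$. Borel--Weil--Bott shows that $H^\bullet(G/P,\mathcal L_1)$ is concentrated in a single cohomological degree, so the spectral sequence of Theorem~\ref{cohomology-fibration} degenerates and yields the vector space isomorphism
\[
H^n(X,\mathcal L)\;\simeq\;\bigoplus_{p+q=n}H^p(G/P,\mathcal L_1)\otimes H^q(Y,\mathcal L_2).
\]
All the spaces here are finite dimensional because $G/P$ and $Y$ are complete.

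It remains to install the $\mathfrak g\times T'$-module structure, and this is where the main difficulty lies. One would like to invoke Proposition~\ref{equivariant-cohomology-fibration} with $G_1=G$ and $G_2=T'$, but that proposition needs the strata and their ambient affine opens to be group-stable, whereas the Bruhat strata $BwP/P$ are only $B$-stable, not $G$-stable. The remedy, available because we are over $\mathbb C$, is the characteristic-zero version of that proposition (the last one of Section~5): there the filtration need not be $G$-stable, since Kempf's $\widehat{G}$-linearizations of local cohomology are infinitesimal and hence automatically compatible with any filtration by closed subschemes. Applying it with $G_1=G$ and $G_2=T'$ makes the displayed isomorphism one of $\mathfrak g\times\mathfrak t'$-modules. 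Because $T'$ is a torus over $\mathbb C$ and all the modules involved are finite dimensional and rational, a $\mathfrak t'$-equivariant linear map between them preserves weight spaces and is therefore $T'$-equivariant; hence the isomorphism is one of $\mathfrak g\times T'$-modules, as asserted. (The toric strata $C_\sigma$ being genuinely $T'$-stable, this last upgrade can also be obtained directly from Proposition~\ref{equivariant-cohomology-fibration} applied with trivial first group.)

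The step I expect to be the real obstacle is not any computation but exactly this equivariance bookkeeping: one must check that the $G$-linearization of $\mathcal L_1$ and the $T'$-linearization of $\mathcal L_2$ glue, via the splitting~(\ref{equi}), to the given $G\times T'$-linearization of $\mathcal L$, so that the $\mathfrak g$-equivariance and the $T'$-equivariance statements genuinely concern the same map; and one must accept the Lie algebra $\mathfrak g$ in place of the group $G$, since the stratification used to run Theorem~\ref{cohomology-fibration} cannot be taken $G$-stable.
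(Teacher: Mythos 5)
Your proposal is correct and follows essentially the same route as the paper: stratify $X=G\times^P Y$ by the $B\times T'$-orbits $BwP/P\times C_\sigma$, use the local structure theorem to verify the hypotheses of Theorem~\ref{cohomology-fibration}, split $\mathcal L\simeq\mathcal L_1\otimes\mathcal L_2$ via the splitting~(\ref{equi}), invoke Borel--Weil--Bott to collapse the spectral sequence, and obtain the $\mathfrak g$-module structure from the characteristic-zero (infinitesimal) equivariance statement of Section~5 together with genuine $T'$-equivariance from the $T'$-stability of the toric strata. Your remarks on the equivariance bookkeeping match the paper's treatment.
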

Now let $X$ be a complete horospherical variety. Then it admits a $G$-equivariant complete toroidal 
resolution of singularities $\pi: \tilde{X}\to X$. 
By equivariance, $\tilde{X}$ is also horospherical. Then we can compute with Corollary \ref{horospherical}, 
the cohomology groups of line bundles on $X$ as well.
\begin{proposition}\label{nontoroidalcase}
Let ${\mathcal L}$ be a line bundle on $X$. Then we have $H^i(X,{\mathcal L})=H^i(\tilde{X},\pi^*{\mathcal L})$ for all $i\geq 0$.
\end{proposition}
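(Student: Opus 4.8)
The plan is to reduce the statement to the fact that $X$ has rational singularities, and then conclude by a formal manipulation of the Leray spectral sequence of $\pi$ together with the projection formula. So the first step is to record the two sheaf-theoretic inputs about the resolution $\pi\colon \tilde X\to X$. Since $\pi$ is proper and birational and $X$ is normal, Zariski's main theorem gives $\pi_{*}\mathcal O_{\tilde X}=\mathcal O_{X}$. Since $X$ is a complete horospherical, hence spherical, variety over $\mathbb C$, it is well known that it has rational singularities; equivalently, for the resolution $\pi$ one has $R^{j}\pi_{*}\mathcal O_{\tilde X}=0$ for all $j>0$ (see, e.g., \cite{perrin2014geometry}). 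Combining these two facts, $R\pi_{*}\mathcal O_{\tilde X}=\mathcal O_{X}$ in the derived category of $X$.

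Next I would feed in the line bundle $\mathcal L$. Because $\mathcal L$ is locally free of finite rank, the projection formula yields
\[
R^{j}\pi_{*}(\pi^{*}\mathcal L)\simeq \mathcal L\otimes_{\mathcal O_{X}}R^{j}\pi_{*}\mathcal O_{\tilde X},
\]
so that $\pi_{*}(\pi^{*}\mathcal L)=\mathcal L$ and $R^{j}\pi_{*}(\pi^{*}\mathcal L)=0$ for $j>0$. Plugging this into the Leray spectral sequence
\[
E_{2}^{p,q}=H^{p}\bigl(X,R^{q}\pi_{*}(\pi^{*}\mathcal L)\bigr)\ \Longrightarrow\ H^{p+q}(\tilde X,\pi^{*}\mathcal L),
\]
every row with $q>0$ vanishes, the spectral sequence degenerates at $E_{2}$, and the edge isomorphism gives $H^{i}(X,\mathcal L)\simeq H^{i}(\tilde X,\pi^{*}\mathcal L)$ for all $i\geq 0$. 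Finiteness of these groups is not needed for the argument, but it holds anyway since both $X$ and $\tilde X$ are complete.

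The only genuinely non-formal ingredient is the vanishing $R^{j}\pi_{*}\mathcal O_{\tilde X}=0$ for $j>0$, i.e. the statement that complete horospherical varieties have rational singularities; this is where the real content of the proposition lies, and I would simply quote the known structural result for spherical varieties rather than reprove it. I should note that one cannot shortcut this by using that $\tilde X$ (being smooth) and the toric fibre $Y$ have rational singularities, since rational singularities do not in general descend along a proper birational morphism to the target. Everything else is a routine application of the projection formula, Zariski's main theorem, and the Leray spectral sequence.
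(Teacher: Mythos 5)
Your proposal is correct and follows essentially the same route as the paper: rational singularities of spherical varieties give $R^{q}\pi_{*}\mathcal O_{\tilde X}=0$ for $q>0$, the projection formula transfers this to $\pi^{*}\mathcal L$, and the Leray spectral sequence then degenerates. Your additional remark that $\pi_{*}\mathcal O_{\tilde X}=\mathcal O_{X}$ (via normality and Zariski's main theorem) makes the $q=0$ edge identification explicit, which the paper leaves implicit.
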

\begin{proof} First note that, $X$ being a spherical variety it has rational singularities (see for example \cite[Corollary 2.3.4]{perrin2014geometry}).
Then we have $R^q\pi_*\mathcal O_{\tilde X}=0$ for all $q>0$.
By projection formula, we can see that $R^q\pi_*\pi^*{\mathcal L}=0$ for all $q>0$.
Hence the Leray spectral sequence
\[
H^p(X,R^q\pi_*\pi^*{\mathcal L})\Rightarrow H^*(\tilde{X},\pi^*{\mathcal L})
\]
degenerates and we get $H^i(X,{\mathcal L})=H^i(\tilde{X},\pi^*{\mathcal L})$ for all $i\geq 0$.
\end{proof}

{\bf Acknowledgments:} 
We would like to thank Michel Brion for valuable discussions and many critical comments.The
first author would also like to thank Max Planck Institute for Mathematics for the postdoctoral fellowship, and for providing very pleasant hospitality.  
\bibliographystyle{amsalpha}
\bibliography{article}

\end{document}